%
%
%
%
%
%
%
\documentclass[smallextended]{svjour3}       
\smartqed  
\usepackage{graphicx}

\usepackage{latexsym}
\usepackage{amssymb,amsbsy,amsmath,amsfonts,amssymb,amscd,mathtools,amsfonts,mathrsfs}

\usepackage{graphicx}
\usepackage{dsfont}
\usepackage{subfig}
\usepackage{pdfsync}
\usepackage{color} 
\usepackage{float}

\newcommand {\supp}   {\text{{\rm supp}}}
\newcommand{\argmax}[1]{\underset{#1}{\operatorname{arg}\,\operatorname{max}}\;}

\newcommand {\p}   {\partial}

\newcommand {\e}  {\varepsilon}

\newcommand{\beq}{\begin{equation}}
\newcommand{\eeq}{\end{equation}}
\newcommand{\bea} {\begin{array}{rl}}
\newcommand{\eea} {\end{array}}
\newcommand{\bepa}{\left\{ \begin{array}{l}}
\newcommand{\eepa} {\end{array}\right.}

\newtheorem{cor}[theorem]{Corollary}

\usepackage{natbib}
\bibliographystyle{abbrvnat}
\setcitestyle{authoryear,open={(},close={)}}
%
%
%
%
%
\begin{document}
\title{A structured population model of clonal selection in acute leukemias with multiple maturation stages
}

\titlerunning{A continuously-structured model of clonal selection in acute leukemias}        

\author{Tommaso Lorenzi \and Anna Marciniak-Czochra \and Thomas Stiehl 
}


\institute{Tommaso Lorenzi \at
		School of Mathematics and Statistics, University of St Andrews, North Haugh, St Andrews, Fife, KY16 9SS, United Kingdom\\
              \email{tl47@st-andrews.ac.uk}           
                      \and
           Anna Marciniak-Czochra \at
		Institute of Applied Mathematics, BIOQUANT and IWR, Im Neuenheimer Feld 205, Heidelberg University, 69120 Heidelberg, Germany\\
		\email{Anna.Marciniak@iwr.uni-heidelberg.de}           
		\and
           Thomas Stiehl \at
		Institute of Applied Mathematics, Im Neuenheimer Feld 205, Heidelberg University, 69120 Heidelberg, Germany\\
		 \email{thomas.stiehl@iwr.uni-heidelberg.de}           
}

\date{Received: date / Accepted: date}

\maketitle

\begin{abstract}
Recent progress in genetic techniques has shed light on the complex co-evolution of malignant cell clones in leukemias. However, several aspects of clonal selection still remain unclear. In this paper, we present a multi-compartmental continuously structured population model of selection dynamics in acute leukemias, which consists of a system of coupled integro-differential equations. Our model can be analysed in a more efficient way than classical models formulated in terms of ordinary differential equations. Exploiting the analytical tractability of this model, we investigate how clonal selection is shaped by the self-renewal fraction and the proliferation rate of leukemic cells at different maturation stages. We integrate analytical results with numerical solutions of a calibrated version of the model based on real patient data. In summary, our mathematical results formalise the biological notion that clonal selection is driven by the self-renewal fraction of leukemic stem cells and the clones that possess the highest value of this parameter are ultimately selected. Moreover, we demonstrate that the self-renewal fraction and the proliferation rate of non-stem cells do not have a substantial impact on clonal selection. Taken together, our results indicate that interclonal variability in the self-renewal fraction of leukemic stem cells provides the necessary substrate for clonal selection to act upon.

\keywords{Acute leukemia \and Clonal selection \and Continuously structured population models \and Integro-differential equations \and Asymptotic analysis}
\end{abstract}

\section{Introduction}
\noindent {\bf Statement of the biological problem.} Leukemias are malignant diseases of the hematopoietic system. Similarly to the healthy hematopoietic system, the leukemic cell bulk is organised as a hierarchy of multiple maturation stages (\emph{i.e.} maturation compartments) -- from stem cells through a number of increasingly mature progenitor cells to the most mature cells~\citep{Bonnet,Hope}. Red blood cells, white blood cells and platelets constitute the most mature compartment of healthy cells, whereas non-functional leukemic blasts are the most mature leukemic cells. The extensive growth of non-functional leukemic blasts leads to impaired hematopoiesis and causes a shortage of healthy blood cells. 

In contrast to the most mature cells, which do not divide and die at a constant rate, stem and progenitor cells can proliferate and give rise to progeny cells which are either at the same maturation stage as their parent (self-renewal) or at a subsequent maturation stage (differentiation). These processes can be quantitatively characterised in terms of two parameters: the cell proliferation rate, \emph{i.e.} the number of cell divisions per unit of time, and the cell self-renewal fraction,   \emph{i.e.} the probability that a progeny adopts the same cell fate as its parent~\citep{MarciniakStiehl,StiehlMMNP}.
There is both theoretical~\citep{StiehlBaranHoMarciniakCR} and experimental~\citep{Jung,Metzeler,Wang} evidence suggesting that the proliferation rate and the self-renewal fraction of leukemic cells have a significant impact on the disease dynamics and on patient prognosis~\citep{SMC-Review2017}.

The collection of all stem, progenitor and most mature cells which carry the same set of genetic alterations defines a leukemic clone. Recent experimental evidence indicates that the leukemic cell bulk of an individual patient is composed of multiple clones carrying different mutations~\citep{Anderson,Ding,Ley} and having different functional properties, among which different proliferation rates and self-renewal fractions~\citep{Eppert,Heuser}. Such clonal heterogeneity poses a major obstacle to successful therapy and management of disease relapse~\citep{Choi,VanDelft,LutzHoang,Lutz2013}. In fact, it has been reported that in many cases of acute lymphoblastic leukemias (ALL) and acute myeloid leukemias (AML) relapse is triggered by the selection of clones that have been present as minor (small) clones at the time of diagnosis rather than by newly acquired mutations~\citep{Choi,VanDelft,Ding,Jan}. 

It is becoming increasingly apparent that clonal heterogeneity results from a selection process at the cellular level which triggers the expansion of some clones and the out-competition of others \citep{Belderbos,Choi,Hirsch,Noetzli,StiehlBaranHoMarciniak,Wu}. However, several aspects of clonal selection are so far not well understood and a number of important questions remain open~\citep{SMCReview2019}. In this paper, we focus  on clonal selection taking place before diagnosis or relapse of acute leukemias. We use a mathematical modelling approach to address the following questions:

\begin{itemize}
	\item[{\bf Q1}] What is the role of the proliferation rate and the self-renewal fraction of leukemic cells in clonal selection?
	\item[{\bf Q2}] Can we observe clonal selection among clones that have identical stem-cell properties (in terms of proliferation rate and self-renewal fraction) and differ in their progenitor-cell properties? 
	\item[{\bf Q3}] What are the necessary conditions (in terms of proliferation rate and self-renewal fraction) for the long-term coexistence of different clones?
\end{itemize}

\noindent{\bf Mathematical framework.} A well-established mathematical approach to describing the dynamics of multiple leukemic clones is to use models with multiple compartments formulated in terms of ordinary differential equations (ODEs)~\citep{StiehlBaranHoMarciniak,Werner}. In these models, every cell is characterised by a pair of indices $i=1, \ldots, M$ and $j=0, \ldots, J$. The index $i$ corresponds to the maturation stage of the cell (\emph{i.e.} the compartment to which the cell belongs). The index $j=1, \ldots, J$ indicates what leukemic clone the cell belongs to, whilst the index $j=0$ is conventionally associated to healthy cells. In this modelling framework, a collection of parameters $p^j_i$ and $a^j_i$ is introduced to model, respectively, the proliferation rate and the self-renewal fraction of cells of clone $j=1,\ldots,J$ at the maturation stage $i=1, \ldots, M-1$. Clonal heterogeneity is incorporated into the model by allowing the values of these parameters to change from one clone to the other. The dynamic of every clone is described by a system of $M$ coupled ODEs that track the time evolution of the density of cells at different maturation stages. An additional system of $M$ coupled ODEs is introduced to model the dynamic of the healthy cells. Coherently with biological findings~\citep{Kondo,Layton,Shinjo}, all ODEs of the model are coupled through a feedback signal that regulates the cell dynamics and depends on the total density of cells at the maturation stage $M$ (\emph{i.e.} all the most mature healthy and leukemic cells). A prototypical version of such ODE models can be found in Appendix~A of this paper.

These models consist of systems of $(J+1) \, M$ coupled ODEs. Due to the high degree of clonal heterogeneity usually observed in leukemia patients, the biologically realistic values of $J$ can be very high. As a result, the analysis of these models becomes very hard (if not impossible) in scenarios that are clinically relevant. This poses limitations to the robustness of the biological conclusions that can be obtained using these models. To overcome these limitations, here we present a modelling framework whereby the index $j$ is replaced by a continuous structuring variable $x$. This variable can be seen as a parameterisation of the self-renewal fraction and the proliferation rate of the different clones. Hence, in our modelling framework the parameters $p^j_i$ and $a^j_i$ of the ODE model are replaced by some functions $p_i(x)$ and $a_i(x)$ which represent, respectively, the proliferation rate and the self-renewal fraction of cells that are at the maturation stage $i$ and belong to the clone identified by the variable $x$. This multi-compartmental continuously structured population model consists of a system of $M$ coupled integro-differential equations (IDEs) that can be analysed in a more efficient way than its ODE counterpart.\\

\noindent{\bf Main results of this paper.}
Exploiting the analytical tractability of the model, we address questions {\bf Q1}-{\bf Q3} listed above by elucidating how clonal selection is shaped by the self-renewal fraction and the proliferation rate of leukemic cells at different maturation stages. Analytical results are integrated with numerical solutions of a calibrated version of the model based on patient data from the existing literature. In summary, our results formalise the idea that clonal selection is controlled by the self-renewal fraction of leukemic stem cells and the clones with the highest value of this parameter are ultimately selected. This implies that only the clones whose stem cells are characterised by the highest self-renewal fraction can stably coexist. Finally, our results indicate that interclonal variability in the self-renewal fraction of leukemic stem cells provides the necessary substrate for clonal selection to act upon.

Our analytical work follows earlier papers on the asymptotic analysis of IDEs that arise in the mathematical modelling of selection dynamics in populations structured by physiological traits ~\citep{barles2009concentration,Busse,calsina2013asymptotics,chisholm2016effects,delitala2012asymptotic,desvillettes2008selection,diekmann2005dynamics,lorenzi2014asymptotic,lorz2011dirac,perthame2008dirac,raoul2011long}. In particular, \citet{Busse} have studied a basic version of the model, in which only two maturation stages are considered (\emph{i.e.} $M=2$) and all the clones have the same cell proliferation rate, \emph{i.e.} the functions $p_1(x), \ldots, p_{M-1}(x)$ are constant. The main novelty of our work is that we let the number of maturation stages $M$ be arbitrarily large and we allow the cell proliferation rate to vary from clone to clone, \emph{i.e.} the functions $p_1(x), \ldots, p_{M-1}(x)$ are not necessarily constant. This makes the application domain of our results significantly wider and strengthens the robustness of our biological conclusions. Due to these additional layers of complexity, our analysis builds on a method of proof which is different from that proposed by Busse \emph{et al.} and is based on asymptotic arguments related to those proposed by Desvillettes \emph{et al.} in~\citep{desvillettes2008selection}. We remark that our analytical results rely on general assumptions and, therefore, are applicable to different subtypes of leukemia. 

\section{Description of the model}
\label{sec:model}
We present a multi-compartmental continuously structured population model for the dynamics of cells of multiple leukemic clones and healthy cells at different maturation stages. The model is given by the system of IDEs~\eqref{e.mod3o} and is defined as a continuous version of the multi-compartmental ODE model presented in~\citep{StiehlBaranHoMarciniak} -- for the sake of completeness, we provide a short description of such an ODE model in Appendix~A -- and a generalisation of the continuously-structured population model with two maturation compartments considered in~\citep{Busse}. The key ideas underlying our model are summarised by the schematic diagrams in Fig.~\ref{fig:Model}. 

As illustrated by the scheme in Fig.~\ref{fig:Model}(\textsf{B}), we capture the high degree of clonal heterogeneity usually observed in leukemia patients by introducing a continuous structuring variable $x \in [0,1]$, and we assume that $x=0$ corresponds to healthy cells whereas different leukemic clones are characterised by different values of $x \in (0,1]$. 
\begin{figure}
\centering
\includegraphics[width=9cm]{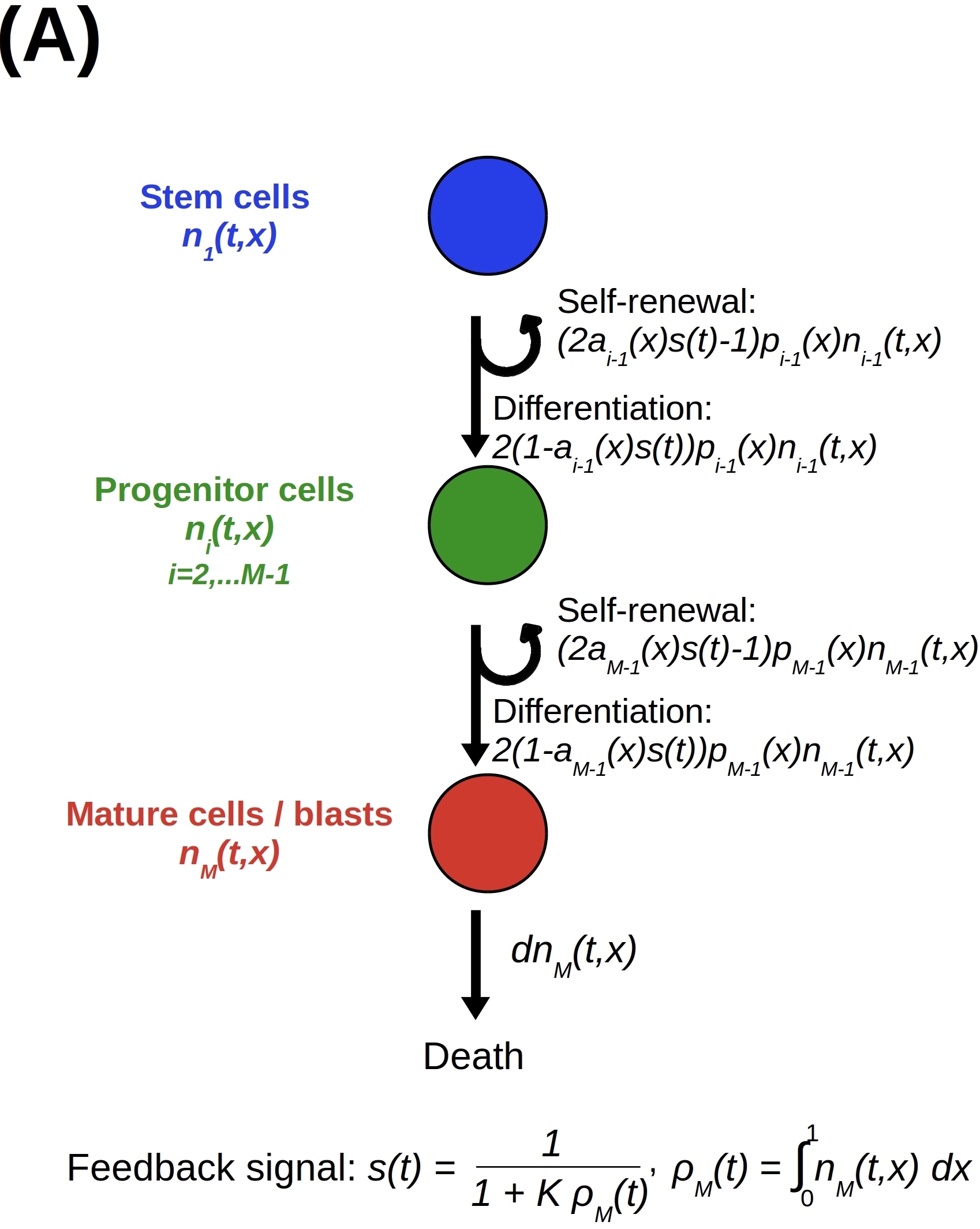}
\includegraphics[width=14cm]{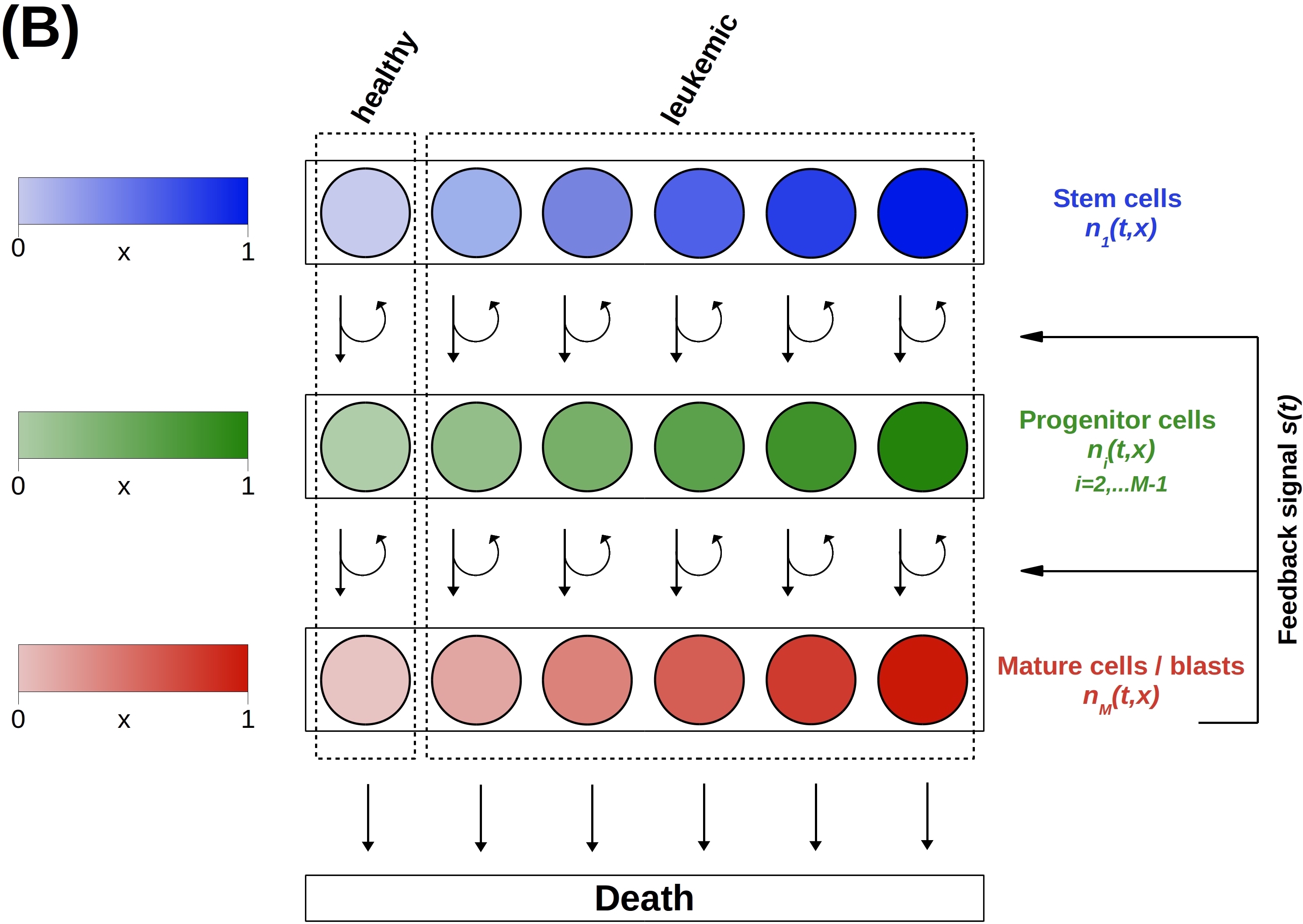}
\caption{{\bf Schematic overview of the model.} {\bf (\textsf{A})} Processes at the cellular level and their mathematical description  {\bf (\textsf{B})} Multi-compartmental continuously structured population model. For each maturation compartment $i$, different leukemic clones are characterised by different values of the continuous variable $x \in (0,1]$, whereas $x=0$ corresponds to healthy cells. All compartments are coupled through the feedback signal $s(t)$. Proliferation, self-renewal and differentiation of cells are modelled as schematised in panel (\textsf{A}).}
\label{fig:Model}
\end{figure}
At time $t \in [0,T]$, where $T>0$ is an arbitrary final time, the population densities of stem cells (compartment $i=1$), increasingly mature progenitor cells (compartments $i=2, \ldots, M-1$), and mature cells and leukemic blasts (compartment $i=M$) are represented by the functions $n_i(t,x) \geq 0$. At every time instant $t$, the total density of cells at the $i$-th maturation stage is defined as
\begin{equation}
\rho_i(t) := \int_0^1 n_i(t,x) \, {\rm d}x \quad \mbox{with} \quad i=1, \ldots, M.
\end{equation}

As illustrated by the scheme in Fig.~\ref{fig:Model}(\textsf{A}), mature cells and leukemic blasts do not divide and die at a constant rate $d>0$. Moreover, we use the functions $p_i(x)$ and $a_i(x)$ to model, respectively, the proliferation rate and the self-renewal fraction of cells of clone $x$ at the maturation stage $i=1,\ldots,M-1$. The flux to mitosis of cells of clone $x$ at the maturation stage $i=1,\ldots,M-1$ at time $t$ is given by $n_i(t,x) \, p_i(x)$. These $n_i(t,x) \, p_i(x)$ cells divide into $2 \, n_i(t,x) \, p_i(x)$ offspring cells. The fraction $a_i(x) \, s(t)$ of the offspring cells is at the maturation stage $i$ (self-renewal), while the fraction $1 - a_i(x) \, s(t)$ is at the maturation stage $i+1$ (differentiation). The factor $s(t)$ models the concentration of feedback signal that promotes the self-renewal of dividing cells and is absorbed by mature cells and leukemic blasts. In agreement with the biological findings presented in \citep{Kondo,Layton,Shinjo}, we let $s(t)$ be a monotonically decreasing function of $\rho_M(t)$. In particular, we use the definition $s(t) = \frac{1}{1 + K \rho_M(t)}$, where the parameter $K>0$ is related to the degradation rate of the feedback signal~\citep{MarciniakStiehl,StiehlBMT,StiehlMCM,StiehlMMNP}. Such a definition can be derived from an ODE for $s(t)$ using a quasi-stationary approximation~\citep{Getto,Mikelic}. 

Throughout the paper, we assume 
\begin{equation}
\label{ass.a}
\displaystyle{a_i \in {\rm W}^{1,\infty}\left([0,1] \right) \; \mbox{ with } \; a_i: [0,1] \rightarrow \left(\frac{1}{2}, 1\right) \; \mbox{ for all } \; i = 1, \ldots, M-1,}
\end{equation}
\begin{equation}
\label{ass.p}
\displaystyle{p_i \in {\rm W}^{1,\infty}\left([0,1] \right) \; \mbox{ with } \; p_i: [0,1] \rightarrow \left(0, 1\right) \; \mbox{ for all } \; i = 1, \ldots, M-1,}
\end{equation}
\begin{equation}
\label{ass.a1i}
\frac{1}{2} < a_i(x) < a_1(x) < 1 \; \mbox{ for all } \; x \in [0,1], \; \mbox{ for all } i = 2, \ldots, M-1.
\end{equation}
Assumptions~\eqref{ass.a}-\eqref{ass.a1i} rely on the following biological considerations. It has been shown that the self-renewal fraction of stem cells has to be larger than $\frac{1}{2}$ to allow for cell expansion~\citep{StiehlMCM,StiehlMMNP}. This justifies assumption~\eqref{ass.a} for $i=1$. Moreover, we justify assumptions~\eqref{ass.a} for $i=2, \ldots, M-1$ on the basis of biological evidence indicating that progenitor cells at different maturation stages are able to expand with little or no influx from the stem-cell compartment~\citep{Roelofs,Yamamoto}. Previous experimental and theoretical studies~\citep{Hope,Reya,StiehlMCM} have shown that stem cells have a higher self-renewal fraction than the progenitor cells to which they give rise. This justifies assumption~\eqref{ass.a1i}. Furthermore, the cellular proliferation rates are bounded from above due to the time required by genome replication. A reasonable upper bound is between 1 and 2 divisions per day~\citep{AlbertsCC}, from which one can deduce assumptions~\eqref{ass.p}. 

The evolution of the cell population density functions $n_i(t,x)$ is governed by the following system of coupled IDEs
\begin{equation}
\label{e.mod3o}
\left\{
\begin{array}{ll}
\displaystyle{\frac{\p}{\p t}n_1(t,x) =  \left(\frac{2 \, a_1(x)}{1 + K \rho_M(t)} - 1 \right) p_1(x) \, n_1(t,x),}
\\\\
\displaystyle{\frac{\p}{\p t}n_i(t,x) =  2 \left(1 - \frac{a_{i-1}(x)}{1 + K \rho_M(t)} \right) p_{i-1}(x) \, n_{i-1}(t,x)} 
\\\\
\displaystyle{\phantom{\frac{\p n_i}{\p t}(t,x) =} \; + \left(\frac{2 \, a_i(x)}{1 + K \rho_M(t)} - 1 \right) p_i(x) \, n_i(t,x), \quad  i=2, \ldots, M-1,}
\\\\
\displaystyle{\frac{\p}{\p t}n_M(t,x) = 2 \left(1 - \frac{a_{M-1}(x)}{1 + K \rho_M(t)} \right) p_{M-1}(x) \, n_{M-1}(t,x) - d \, n_M(t,x),}
\\\\
\displaystyle{\rho_{M}(t) = \int_0^1 n_{M}(t,x) \, {\rm d}x,}
\end{array}
\right.
\end{equation}
subject to the initial conditions below
\begin{equation}
\label{e.mod3IC}
n_{i}(0,x) = n_{i}^0(x) \in {\rm C}([0,1]), \;\; 0 < n_{i}^0 < \infty \;\; \mbox{on } [0,1] \;\; \mbox{ for } \; i=1, \ldots, M,
\end{equation}
which correspond to a biologically consistent scenario where numerous leukemic clones are present at the time of diagnosis. 

\section{Analysis of clonal selection}
We prove a general asymptotic result (\emph{vid.} Section \ref{subsec41}) that sheds light on the way in which the self-renewal fraction and the proliferation rate of cells at different maturation stages impact on clonal selection (\emph{vid.} Section \ref{subsec42}). 

\subsection{A general asymptotic result}
\label{subsec41}
Building on previous studies on the long-time behaviour of continuously structured population models \citep{barles2009concentration,chisholm2016effects,desvillettes2008selection,diekmann2005dynamics,lorz2011dirac,perthame2008dirac}, we introduce a small parameter $\e > 0$ and use the time scaling $t \mapsto \frac{t}{\e}$, so that considering the asymptotic regime $\e \rightarrow 0$ is equivalent to studying the behaviour of the solutions to the IDEs of the model over many cell generations. With this scaling, the Cauchy problem for the cell population density functions $n_{i}\left(\frac{t}{\e},x\right)=n_{i \e}(t,x)$ reads as
\begin{equation}
\label{e.mod3}
\left\{
\begin{array}{ll}
\displaystyle{\e \, \frac{\p}{\p t}n_{1 \e}(t,x) = P_{1}(\rho_{M \e}(t),x)\, n_{1 \e}(t,x),}
\\\\
\displaystyle{\e \, \frac{\p}{\p t}n_{i \e}(t,x) =  Q_{i-1}(\rho_{M \e}(t),x)  \, n_{i-1 \e}(t,x)} \\
\displaystyle{\phantom{\e \, \frac{\p n_{i \e}}{\p t}(t,x) =} \; + P_{i}(\rho_{M \e}(t),x) \, n_{i \e}(t,x), \quad  i=2, \ldots, M-1,}
\\
\displaystyle{\e \, \frac{\p}{\p t}n_{M\e}(t,x) = Q_{M-1}(\rho_{M \e}(t),x)  \, n_{M-1 \e}(t,x) - d \, n_{M \e}(t,x),}
\\\\
\displaystyle{\rho_{M \e}(t) = \int_0^1 n_{M \e}(t,x) \, {\rm d}x,}
\\\\
n_{i\e}(0,x) = n_{i}^0(x), \quad  i=1, \ldots, M,
\end{array}
\right.
\end{equation}
where
$$
P_{i}(\rho_{M \e},x) := \left(\frac{2 \, a_i(x)}{1 + K \rho_{M \e}} - 1 \right) \, p_i(x) \; \mbox{ for } \; i=1, \ldots, M-1 
$$
and
$$
Q_{i}(\rho_{M \e},x) := 2 \left(1 - \frac{a_i(x)}{1 + K \rho_{M \e}} \right) \, p_i(x)  \; \mbox{ for } \; i=1, \ldots, M-1.  
$$
Throughout this section we use the notation
$$
\rho_{i \e}(t) := \int_0^1 n_{i \e}(t,x) \, {\rm d}x \quad \mbox{with} \quad i=1, \ldots, M
$$
and
\begin{equation}\label{eq:Re_C3}
R_{i \e}(t,x) := \int_0^t P_{i}(\rho_{M \e}(s),x) \, {\rm d}s \; \mbox{ for } \; i=1,\ldots,M-1.
\end{equation}

A general result on the asymptotic behaviour of the cell population density functions $n_{i \e}(t,x)$ for $\e \rightarrow 0$ (\emph{i.e.} in the limit of many cell generations) is established by Theorem~\ref{th:case3}.
\begin{theorem}[A general asymptotic result]
\label{th:case3}
Under assumptions \eqref{ass.a}-\eqref{ass.a1i} and \eqref{e.mod3IC}, the solutions to the Cauchy problem \eqref{e.mod3} are such that, up to extraction of subsequences,
\begin{equation}\label{eq:nle_C3}
n_{i\e} \xrightharpoonup[\varepsilon  \rightarrow 0]{} n_{i}  \; \mbox{ on } \; w^*-{\rm L}^{\infty}\left((0,T), {\rm M}^1([0,1])\right) \mbox{ for } i=1, \ldots, M
\end{equation}
and
\begin{equation}\label{eq:Rleunif_C3}
R_{i \e} \xrightarrow[\varepsilon  \rightarrow 0]{} R_i \; \mbox{uniformly in } \; [0,T] \times [0,1] \; \mbox{ for } \; i=1,\ldots,M-1,
\end{equation}
where
\begin{equation}\label{eq:R_C3}
R_i(t,x) := \int_0^t P_{i}(\rho_{M}(s),x) \, {\rm d}s \quad \text{and} \quad R_i \in {\rm W}^{1,\infty}\left((0,T) \times [0,1]\right).
\end{equation}
Moreover, the limits $R_i$ are such that 
\begin{equation}
\label{eq:step3a_C3th1}
\max_{x \in [0,1]} R_1(t,x)=0 \; \mbox{ for any } t \in [0,T]
\end{equation}
and
\begin{equation}
\label{eq:step3a_C3th2}
\max_{x \in [0,1]} R_i(t,x)<0 \; \mbox{ for any } t \in [0,T],  \; \mbox{ for } \; i=2, \ldots, M-1.
\end{equation}
Finally, the limits $n_i$ are such that for a.e. $t \in [0,T]$ 
\begin{equation}\label{eq:suppn1_C3}
\supp(n_i(t, \cdot)) = \argmax{x \in [0,1]} a_1(x) \mbox{ for } i=1, \ldots, M.
\end{equation}
\end{theorem}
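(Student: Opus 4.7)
The plan is to adapt the asymptotic analysis of \citet{desvillettes2008selection} to the multi-compartmental chain, in four stages: (i) uniform a priori bounds on $\rho_{M\e}$, (ii) compactness and identification of the limits $R_i$ and $n_i$, (iii) the clamp $\max_x R_1 = 0$ together with the pinning $\rho_M \equiv \bar\rho$, and (iv) the strict inequality for $i \geq 2$ and the support description. First I would establish uniform bounds $0 < c \leq \rho_{M\e}(t) \leq C$ on $[0, T]$. The upper bound comes from studying a weighted sum $\sum_i \alpha_i \rho_{i\e}$ whose time derivative, after the offspring/flux terms telescope using $P_{i-1}+Q_{i-1} = 2 p_{i-1}$, yields a differential inequality with growth rate independent of $\e$; the lower bound comes from the observation that $\rho_{M\e}$ too small on an interval would force $P_1(\rho_{M\e}, \cdot) \geq c > 0$ uniformly in $x$, and iterating the Duhamel representations of $n_{2\e}, \ldots, n_{M\e}$ would push mass back into $\rho_{M\e}$. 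Together with assumptions \eqref{ass.a}--\eqref{ass.p}, these bounds render $P_i, Q_i$ uniformly $L^\infty$ in $(t,x)$, so $R_{i\e}$ is uniformly Lipschitz in $t$; combined with the $W^{1,\infty}$ regularity of $a_i, p_i$, this gives equi-Lipschitz regularity in $(t,x)$.

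By Arzel\`a--Ascoli I then extract a subsequence along which $R_{i\e} \to R_i$ uniformly on $[0,T] \times [0,1]$ with $R_i \in {\rm W}^{1,\infty}$, and weak-$*$ compactness of ${\rm L}^{\infty}((0,T); {\rm M}^1([0,1]))$ yields $n_{i\e} \xrightharpoonup[]{} n_i$ along a diagonal extraction. To identify $R_i(t, x) = \int_0^t P_i(\rho_M(s), x)\,{\rm d}s$ I would pass to a further subsequence with $\rho_{M\e} \to \rho_M$ pointwise a.e., using a Helly-type selection after controlling fast fluctuations of $\rho_{M\e}$ through the dissipative term $-d\, n_{M\e}$ in the last equation; dominated convergence then allows the limit to be taken inside $P_i$.

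The core step is the clamp $\max_x R_1(t, \cdot) = 0$. The first IDE integrates explicitly to $n_{1\e}(t, x) = n_1^0(x)\,\exp\!\bigl(R_{1\e}(t, x)/\e\bigr)$, so Laplace's method gives $\e \log \rho_{1\e}(t) \to \max_x R_1(t, \cdot)$. A strictly positive maximum on an interval would make $\rho_{1\e}$ grow like $\exp(c/\e)$; iterating the Duhamel representation
\[
n_{i\e}(t,x) = n_i^0(x)\, e^{R_{i\e}(t,x)/\e} + \frac{1}{\e}\int_0^t Q_{i-1}(\rho_{M\e}(s), x)\, n_{i-1\,\e}(s,x)\, e^{(R_{i\e}(t,x) - R_{i\e}(s,x))/\e}\, {\rm d}s
\]
stage-by-stage through $i = 2, \ldots, M$ propagates this blow-up to $\rho_{M\e}$, contradicting the upper bound. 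A strictly negative maximum instead drains the whole chain to zero, contradicting the lower bound. Hence $\max_x R_1(t, \cdot) \equiv 0$, which is \eqref{eq:step3a_C3th1}. An envelope-type differentiation of this identity at any $x^* \in \argmax{x \in [0,1]} a_1(x)$ forces $P_1(\rho_M(t), x^*) = 0$ for a.e. $t$, pinning $\rho_M(t) \equiv \bar\rho := (2\bar a - 1)/K$ with $\bar a := \max_{x} a_1(x)$.

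Substituting $\rho_M = \bar\rho$ into $P_i$ for $i \geq 2$ gives $P_i(\bar\rho, x) = \frac{a_i(x) - \bar a}{\bar a}\, p_i(x)$, which is strictly negative uniformly in $x$ by \eqref{ass.a1i} and $p_i > 0$; hence $R_i(t, x) = t\, P_i(\bar\rho, x) < 0$ for $t > 0$, establishing \eqref{eq:step3a_C3th2}. The support statement for $i = 1$ follows directly from Laplace concentration of $n_{1\e} = n_1^0 \exp(R_{1\e}/\e)$ on $\argmax{x \in [0,1]} R_1(t,\cdot) = \argmax{x \in [0,1]} a_1(x)$; for $i \geq 2$ the Duhamel representation transmits concentration along the chain because the source $Q_{i-1}\, n_{i-1\,\e}$ is already concentrated on $\argmax{x \in [0,1]} a_1(x)$ while the $e^{R_{i\e}/\e}$ homogeneous term is exponentially damped elsewhere. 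The main obstacle I anticipate is in step (iii): the iterated Duhamel estimates must be sharp enough that exponential growth at rate $1/\e$ is not absorbed by the $1/\e$ time integration, and the lower-bound half of the dichotomy has to remain robust against fast oscillations of $\rho_{M\e}$ on sub-$O(1)$ time scales. Securing this clean blow-up/drain-out dichotomy uniformly in $t \in [0,T]$ is the key technical point on which the whole argument hinges.
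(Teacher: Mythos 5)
Your overall architecture (a priori bounds, Arzel\`a--Ascoli for $R_{i\e}$, weak-$*$ compactness for $n_{i\e}$, Laplace's method on $n_{1\e}=n_1^0 e^{R_{1\e}/\e}$ to force $\max_x R_1\le 0$, and Duhamel propagation down the chain) is close in spirit to the paper's, and your observation that $R_1(t,x^*)=0$ pins $P_1(\rho_M(t),x^*)=0$ a.e.\ --- giving $R_i(t,x)=t\,P_i(\bar\rho,x)<0$ for $i\ge 2$ --- is a clean alternative to the paper's route to \eqref{eq:step3a_C3th2} (the paper instead deduces it directly from $R_1\le 0$ and $a_i<a_1$ without identifying $\rho_M$). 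However, there are two genuine gaps. First, the uniform upper bound cannot be obtained from a telescoping weighted sum with ``growth rate independent of $\e$'': since the left-hand sides carry the factor $\e\,\frac{{\rm d}}{{\rm d}t}$, a Gronwall inequality with an $\e$-independent positive rate yields $e^{Ct/\e}$, which blows up as $\e\to 0$. (Also note $P_{i-1}+Q_{i-1}=p_{i-1}$, not $2p_{i-1}$.) An $\e$-uniform bound requires exploiting the saturation of the feedback, which the paper does by first proving Riccati-type bounds on the ratios $\rho_{i\e}/\rho_{i+1\e}$, deducing $\rho_{i\e}\le A_i\rho_{M\e}$, and thereby converting the feedback $\frac{1}{1+K\rho_{M\e}}$ into a self-limiting term for each compartment (Appendix~B). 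Without some version of this, your step (i) fails.

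Second, your dichotomy hinges on an $\e$-uniform positive lower bound $\rho_{M\e}\ge c>0$ on $[0,T]$, which you assert via a vague ``push mass back'' argument and yourself identify as the main obstacle. This bound is not needed and is by far the hardest route: the paper closes the argument entirely at the level of the limit objects. Namely, once $R_1\le 0$ is known, if $\max_x R_1(\cdot,x)$ were to become strictly negative on some interval $(\hat t,\hat t+\sigma)$, the test-function argument shows all limits $n_i$ vanish there, hence $\rho_M=0$ a.e.\ on that interval; but then $R_1(\hat t+\sigma,\hat x)=R_1(\hat t,\hat x)+\int_{\hat t}^{\hat t+\sigma}p_1(\hat x)\,[2a_1(\hat x)-1]\,{\rm d}t>0$ by assumption \eqref{ass.a}, contradicting $R_1\le 0$. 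You should replace the lower-bound half of your dichotomy with this self-consistency closure; as written, that half of your proof rests on an unestablished (and possibly false near $t=0$, where an initial layer can occur) uniform estimate. A smaller point worth making explicit in either approach: passing to the limit inside the nonlinearity $P_i(\rho_{M\e},\cdot)$ to identify $R_i$ requires more than weak-$*$ convergence of $\rho_{M\e}$; your proposal to secure a.e.\ convergence is the right instinct, but the BV control you invoke through the $-d\,n_{M\e}$ term needs to be justified against the $1/\e$ factor in $\frac{{\rm d}}{{\rm d}t}\rho_{M\e}$.
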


\begin{proof} We divide the proof of Theorem \ref{th:case3} into five parts.
\\\\
{\bf Part 1: Non-negativity of $n_{i\e}(t,x)$ and continuity of $\rho_{i \e}(t)$}.
For all $\e >0$, standard arguments based on the Banach fixed point theorem allow one to prove that, under assumptions \eqref{ass.a}-\eqref{ass.a1i} and \eqref{e.mod3IC}, the Cauchy problem~\eqref{e.mod3} admits a solution of non-negative components $n_{i \e} \in {\rm C}([0,\infty),{\rm L}^1([0,1]))$. 
\\\\
{\bf Part 2: Uniform upper bounds for $\rho_{i \e}(t)$}. The following uniform upper bounds hold
 \begin{equation}
\label{eq:UBfinal}
\rho_{i \e}(t) < \overline{\rho}_{i} \; \mbox{ for } \; i=1, \ldots, M, \quad \mbox{ for all } \; t \geq 0 \; \mbox{ and for any } \; \e >0
\end{equation}
with $0<\overline{\rho}_{i}<\infty$, the proof of which is provided in Appendix~B. 
\\\\
{\bf Part 3: Proof of \eqref{eq:nle_C3}}. The upper bounds~\eqref{eq:UBfinal} allow us to use the Banach-Alaoglu theorem to conclude that, up to extraction of subsequences, the asymptotic result~\eqref{eq:nle_C3} is verified.
\\\\
{\bf Part 4: Proof of \eqref{eq:Rleunif_C3} and \eqref{eq:R_C3}.} 
The asymptotic result~\eqref{eq:nle_C3} on $n_{M \, \e}$ allows us to conclude that there exists a subsequence of $R_{i \e}$, that we denote again as $R_{i \e}$, such that
\begin{equation}\label{eq:Rle_C3}
\mbox{ for all } (t,x) \in [0,\infty) \times [0,1] \quad \text{we have} \quad R_{i \e}(t,x) \xrightarrow[\varepsilon  \rightarrow 0]{} R_{i}(t,x),
\end{equation}
for all $i=1,\ldots,M-1$, where $R_{i}(t,x)$ is defined according to~\eqref{eq:R_C3}. Furthermore, assumptions \eqref{ass.a} and \eqref{ass.p} ensure that, for any $\e>0$, the function $R_{i \e}$ and its first derivatives with respect to $t$ and $x$ are bounded in ${\rm L}^{\infty}\left((0,T) \times [0,1]\right)$. Therefore, $R_{i \e}$ belongs to ${\rm W}^{1,\infty}\left((0,T) \times [0,1]\right)$ for any $\e>0$ and, using the fact that ${\rm W}^{1,\infty}((0,T) \times [0,1])$ is compactly embedded in ${\rm C}([0,T] \times [0,1])$, we conclude that the uniform convergence result~\eqref{eq:Rleunif_C3} is verified.\\

\noindent {\bf Part 5: Proof of \eqref{eq:step3a_C3th1}-\eqref{eq:suppn1_C3}}. We prove the results~\eqref{eq:step3a_C3th1}-\eqref{eq:suppn1_C3} in four steps.
\\\\
{\bf Part 5 -- Step 1.} We prove that 
\begin{equation}
\label{eq:step1_C3}
R_{1}(t,x) \leq 0 \; \mbox{ for all } \; (t,x) \in (0,T) \times [0,1]
\end{equation}
and
\begin{equation}
\label{eq:step1_C3bis}
R_{i}(t,x) < 0 \; \mbox{ for all } \; (t,x) \in (0,T) \times [0,1], \; \mbox{ for all } i =2, \ldots, M-1.
\end{equation}
By contradiction, assume that there exists $(\hat{t},\hat{x}) \in [0,T] \times (0,1)$ such that $R_1(\hat{t},\hat{x}) > 0$. The convergence result~\eqref{eq:Rleunif_C3} for $i=1$ implies that $R_{1 \e}(t,x) \geq \sigma$ for some $\sigma > 0$, as long as $|t - \hat{t}| \leq \sigma$, $|x - \hat{x}| \leq \sigma$ and $\sigma \geq \e > 0$. Since $n_1^0>0$ on $[0,1]$ we conclude that
$$
\int_0^1 n_{1 \e}(t,x) \, {\rm d}x = \int_0^1 n_1^0(x) \, e^{\frac{R_{1 \e}(t,x)}{\e}} \, {\rm d}x \geq e^{\frac{\sigma}{\e}} \, \int_{\hat{x}-\sigma}^{\hat{x}+\sigma} n_1^0(x) \, {\rm d}x \xrightarrow[\varepsilon  \rightarrow 0]{} \infty 
$$
for all $t \in [\hat{t}-\sigma,\hat{t}+\sigma]$. This contradicts the upper bound~\eqref{eq:UBfinal} for $\rho_{1 \e}(t)$, thus proving~\eqref{eq:step1_C3}. Moreover, such a result on $R_1$ ensures that
$$
\int_0^t P_{1}(\rho_{M}(s),x) \, {\rm d}s  = p_1(x) \, \int_0^t \left(\frac{2 \, a_1(x)}{1 + K \rho_{M}(s)} - 1 \right) \, {\rm d}s \leq 0
$$
for all $(t,x) \in (0,T) \times [0,1]$ and, since $p_1 > 0$ on $[0,1]$ [\emph{cf.} assumption \eqref{ass.p}], the above inequality implies that
$$ 
\int_0^t \left(\frac{2 \, a_1(x)}{1 + K \rho_{M}(s)} - 1 \right) \, {\rm d}s \leq 0 \; \mbox{ for all } \; (t,x) \in (0,T) \times [0,1].
$$
Since $a_1(x) > a_i(x)$ for all $x \in [0,1]$ and for all $i=2,\ldots, M-1$ [\emph{cf.} assumption \eqref{ass.a1i}], the latter inequality allows one to conclude that for all $i =2, \ldots, M-1$
$$ 
\int_0^t \left(\frac{2 \, a_i(x)}{1 + K \rho_{M}(s)} - 1 \right) \, {\rm d}s < 0 \; \mbox{ for all } \; (t,x) \in (0,T) \times [0,1]
$$
and using the fact that $p_{i}>0$ on $[0,1]$ [\emph{cf.} assumption \eqref{ass.p}] one obtains
$$ 
p_i(x) \, \int_0^t \left(\frac{2 \, a_i(x)}{1 + K \rho_{M}(s)} - 1 \right) \, {\rm d}s < 0 \; \mbox{ for all } \; (t,x) \in (0,T) \times [0,1]
$$
for all $i =2, \ldots, M-1$. This concludes the proof of~\eqref{eq:step1_C3bis}, which implies that~\eqref{eq:step3a_C3th2} is verified.
\\\\
{\bf Part 5 -- Step 2.} We prove that
\begin{equation}
\label{eq:n=0ae_C3}
\mbox{if } \; R_1(t,\cdot) < 0 \; \mbox{ on } \; [0,1] \; \mbox{ then } \; n_i(t,\cdot) = 0 \; \mbox{ a.e. on } [0,1] 
\end{equation}
for all $i =1, \ldots, M$. Throughout this step we consider $(\hat{t},\hat{x}) \in (0,T) \times (0,1)$ such that $R_1(\hat{t},\hat{x}) < 0$.\\

\noindent \emph{Proof of~\eqref{eq:n=0ae_C3} for $i=1$.}  The uniform convergence result~\eqref{eq:Rleunif_C3} for $i=1$ ensures that there exists some $\sigma > 0$ such that $R_{1 \e}(t,x) \leq - \sigma$ for $|t - \hat{t}| \leq \sigma$, $|x - \hat{x}| \leq \sigma$ and $\sigma \geq \e > 0$. This allows us to conclude that
\begin{eqnarray*}
 \lim_{\e \rightarrow 0} \int_{\hat{t}-\sigma}^{\hat{t}+\sigma} \int_{\hat{x}-\sigma}^{\hat{x}+\sigma} n_{1 \e}(t,x) \, {\rm d}x \, {\rm d}t & = & \lim_{\e \rightarrow 0} \int_{\hat{t}-\sigma}^{\hat{t}+\sigma} \int_{\hat{x}-\sigma}^{\hat{x}+\sigma} n_1^0(x) e^{\frac{R_{1 \e}(t,x)}{\e}} \, {\rm d}x \, {\rm d}t
\nonumber \\
& \leq & 2 \, \sigma \lim_{\e \rightarrow 0} e^{-\frac{\sigma}{\e}}  \int_{\hat{x}-\sigma}^{\hat{x}+\sigma} n_1^0(x) \, {\rm d}x,
\end{eqnarray*}
that is,
\begin{equation}
\label{e.npn1a}
 \lim_{\e \rightarrow 0} \int_{\hat{t}-\sigma}^{\hat{t}+\sigma} \int_{\hat{x}-\sigma}^{\hat{x}+\sigma} n_{1 \e}(t,x) \, {\rm d}x \, {\rm d}t = 0.
\end{equation}
The weak convergence result~\eqref{eq:nle_C3} for $n_{1 \e}(t,x)$ ensures that
\begin{equation}
\label{e.npn1b}
 \int_{\hat{t}-\sigma}^{\hat{t}+\sigma} \int_0^1 \varphi(x) n_1(t,x) \,{\rm d}x\, {\rm d}t = \lim_{\e \rightarrow 0}  \int_{\hat{t}-\sigma}^{\hat{t}+\sigma} \int_0^1 \varphi(x) n_{1 \e}(t,x) \,{\rm d}x\, {\rm d}t
\end{equation}
for every test function $\varphi : [0,1] \rightarrow \mathbb{R}$. Therefore, choosing a smooth test function $\varphi$ that satisfies the following conditions
\begin{equation}
\label{a.testfunc}
\displaystyle{\mathbf{1}_{[\hat{x}-\sigma/2,\hat{x}+\sigma/2]} \leq \varphi \leq  \mathbf{1}_{[\hat{x}-\sigma,\hat{x}+\sigma]}},
\end{equation}
where $\mathbf{1}$ denotes the indicator function, and using the fact that $n_{1 \e}$ is non-negative we find
\begin{equation}
\label{e.nintineq1}
 \int_{\hat{t}-\sigma}^{\hat{t}+\sigma} \int_0^1 \varphi(x) n_1(t,x) \,{\rm d}x\, {\rm d}t \leq \lim_{\e \rightarrow 0}  \int_{\hat{t}-\sigma}^{\hat{t}+\sigma}  \int_{\hat{x}-\sigma}^{\hat{x}+\sigma} n_{1 \e}(t,x) \, {\rm d}x \, {\rm d}t.
\end{equation}
Substituting~\eqref{e.npn1a} into the latter integral inequality we conclude that 
\begin{equation}
\label{e.npn1bsup}
\int_{\hat{t}-\sigma}^{\hat{t}+\sigma} \int_0^1 \varphi(x) n_1(t,x) \,{\rm d}x\, {\rm d}t = 0
\end{equation}
for every smooth test function that satisfies~\eqref{a.testfunc}. Hence, the result~ \eqref{eq:n=0ae_C3} for $i=1$ is verified.\\

\noindent \emph{Proof of~\eqref{eq:n=0ae_C3} for $i=2$.} Multiplying by a test function $\varphi : [0,1] \rightarrow \mathbb{R}$ both sides of the IDE~\eqref{e.mod3} for $n_{2 \e}$ and integrating over the set $[\hat{t}-\sigma,\hat{t}+\sigma] \times [0,1]$ we find
\begin{eqnarray*}
&& \e \left[\int_0^1 \varphi(x) \, n_{2 \e}(\hat{t}+\sigma,x) \, {\rm d}x - \int_0^1 \varphi(x) \, n_{2 \e}(\hat{t}-\sigma,x) \, {\rm d}x \right] \\
&&  \phantom{aaaa} = \int_{\hat{t}-\sigma}^{\hat{t}+\sigma} \int_0^1 Q_{1}(\rho_{M \, \e}(t),x) \, \varphi(x) \, n_{1 \e}(t,x) \, {\rm d}x \, {\rm d}t \\
&&  \phantom{aaaaaaa} + \int_{\hat{t}-\sigma}^{\hat{t}+\sigma} \int_0^1 P_{2}(\rho_{M \, \e}(t),x) \, \varphi(x) \, n_{2 \e}(t,x) \, {\rm d}x \, {\rm d}t.
\end{eqnarray*}
Since the uniform upper bound~\eqref{eq:UBfinal} for $\rho_{2 \e}$ ensures that
$$
\lim_{\varepsilon \rightarrow 0} \e \left[\int_0^1 \varphi(x) \, n_{2 \e}(\hat{t}+\sigma,x) \, {\rm d}x - \int_0^1 \varphi(x) \, n_{2 \e}(\hat{t}-\sigma,x) \, {\rm d}x \right]  = 0,
$$
we conclude that
\begin{eqnarray*}
&& \lim_{\e \rightarrow 0} \int_{\hat{t}-\sigma}^{\hat{t}+\sigma} \int_0^1 Q_{1}(\rho_{M \, \e}(t),x) \, \varphi(x) \, n_{1 \e}(t,x) \, {\rm d}x \, {\rm d}t  \\
&&  \phantom{aaaa} + \lim_{\e \rightarrow 0} \int_{\hat{t}-\sigma}^{\hat{t}+\sigma} \int_0^1 P_{2}(\rho_{M \, \e}(t),x) \, \varphi(x) \, n_{2 \e}(t,x) \, {\rm d}x \, {\rm d}t = 0.
\end{eqnarray*}
Choosing a smooth test function that satisfies~\eqref{a.testfunc} and using~\eqref{e.npn1b} and~\eqref{e.npn1bsup} along with the fact that $Q_{1}(\rho_{M \, \e}(t),x) > 0$ on $[0,T] \times [0,1]$ yields 
\begin{equation}
\label{e.P2en2e}
\lim_{\e \rightarrow 0} \int_{\hat{t}-\sigma}^{\hat{t}+\sigma} \int_{0}^{1} P_{2}(\rho_{M \, \e}(t),x) \, \varphi(x) \, n_{2 \e}(t,x) \, {\rm d}x \, {\rm d}t = 0.
\end{equation}
Since $P_{2}(\rho_{M \, \e}(t),x)<0$ on $[0,1]$ for a.e. $t \in (0,T)$ when $\e \to 0$, the result given by \eqref{e.P2en2e} allows us to conclude that
\begin{equation}
\label{e.intn20}
\lim_{\e \rightarrow 0} \int_{\hat{t}-\sigma}^{\hat{t}+\sigma} \int_{0}^{1} \varphi(x) \, n_{2 \e}(t,x) \, {\rm d}x \, {\rm d}t = 0,
\end{equation}
for every smooth test function that satisfies~\eqref{a.testfunc}. The weak convergence result~\eqref{eq:nle_C3} for $n_{2 \e}(t,x)$ ensures that
\begin{equation}
\label{e.npn1bbb}
\int_{\hat{t}-\sigma}^{\hat{t}+\sigma} \int_0^1 \varphi(x) \, n_2(t,x) \, {\rm d}x \, {\rm d}t = \lim_{\e \rightarrow 0} \int_{\hat{t}-\sigma}^{\hat{t}+\sigma} \int_0^1 \varphi(x) \, n_{2 \e}(t,x) \, {\rm d}x \, {\rm d}t.
\end{equation}
Hence, using~\eqref{e.intn20} and~\eqref{e.npn1bbb} we conclude that
$$
\int_{\hat{t}-\sigma}^{\hat{t}+\sigma}\int_0^1 \varphi(x) \, n_2(t,x) \,{\rm d}x\, {\rm d}t = 0
$$
for every smooth test function that satisfies~\eqref{a.testfunc}. Therefore, the result~\eqref{eq:n=0ae_C3} for $i=2$ is verified.\\

\noindent \emph{Proof of~\eqref{eq:n=0ae_C3} for all $ i=3,\ldots,M-1$.} Using a bootstrap argument based on the method of proof that we have used for the case $i=2$, one can prove that for all $i=3,\ldots,M-1$
\begin{equation}
\label{e.intnig0}
\lim_{\e \rightarrow 0} \int_{\hat{t}-\sigma}^{\hat{t}+\sigma} \int_{0}^{1} \varphi(x) \, n_{i \e}(t,x) \, {\rm d}x \, {\rm d}t  = \int_{\hat{t}-\sigma}^{\hat{t}+\sigma} \int_{0}^{1} \varphi(x) \, n_{i}(t,x) \, {\rm d}x \, {\rm d}t  = 0 
\end{equation}
for every smooth test function that satisfies~\eqref{a.testfunc}. Hence, the results~\eqref{eq:n=0ae_C3} for $i~=~3,\ldots,M-1$ are verified. \\

\noindent \emph{Proof of~\eqref{eq:n=0ae_C3} for $i=M$.} Multiplying by a test function $\varphi : [0,1] \rightarrow \mathbb{R}$ both sides of the IDE~\eqref{e.mod3} for $n_{M \e}$ and integrating over the set $[\hat{t}-\sigma,\hat{t}+\sigma] \times [0,1]$ we obtain
\begin{eqnarray*}
&& \e \left[\int_0^1 \varphi(x) \, n_{M \e}(\hat{t}+\sigma,x) \, {\rm d}x - \int_0^1 \varphi(x) \, n_{M \e}(\hat{t}-\sigma,x) \, {\rm d}x \right] \\
&& \phantom{aaaa} = \int_{\hat{t}-\sigma}^{\hat{t}+\sigma} \int_0^1 Q_{M-1}(\rho_{M \, \e}(t),x) \, \varphi(x) \, n_{M-1 \e}(t,x) \, {\rm d}x \, {\rm d}t \\
&& \phantom{aaaaaaaa} - d \, \int_{\hat{t}-\sigma}^{\hat{t}+\sigma} \int_0^1 \varphi(x) \, n_{M \e}(t,x) \, {\rm d}x \, {\rm d}t.
\end{eqnarray*}
Since the uniform upper bound~\eqref{eq:UBfinal} for $\rho_{M \e}$ ensures that
$$
\lim_{\varepsilon \rightarrow 0} \e \left[\int_0^1 \varphi(x) \, n_{M \e}(\hat{t}+\sigma,x) \, {\rm d}x - \int_0^1 \varphi(x) \, n_{M \e}(\hat{t}-\sigma,x) \, {\rm d}x \right] = 0,
$$
we conclude that
\begin{eqnarray*}
&& \lim_{\e \rightarrow 0} \int_{\hat{t}-\sigma}^{\hat{t}+\sigma} \int_0^1 Q_{M-1}(\rho_{M \, \e}(t),x) \, \varphi(x) \, n_{M-1 \e}(t,x) \, {\rm d}x \, {\rm d}t \\
&& \phantom{aaaa}  - d \, \lim_{\e \rightarrow 0} \int_{\hat{t}-\sigma}^{\hat{t}+\sigma} \int_0^1 \varphi(x) \, n_{M \e}(t,x) \, {\rm d}x \, {\rm d}t = 0.
\end{eqnarray*}
Choosing a smooth test function that satisfies~\eqref{a.testfunc} and using the result~\eqref{e.intnig0} for $n_{M-1 \e}(t,x)$ along with the fact that $Q_{M-1}(\rho_{M \, \e}(t),x) > 0$ on $[0,T] \times [0,1]$ gives
\begin{equation}
\label{e.intn30}
\lim_{\e \rightarrow 0} \int_{\hat{t}-\sigma}^{\hat{t}+\sigma} \int_{0}^{1}  \varphi(x) \, n_{M \e}(t,x) \, {\rm d}x \, {\rm d}t = 0.
\end{equation}
The weak convergence result \eqref{eq:nle_C3} for $n_{M \e}(t,x)$ ensures that
\begin{equation}
\label{e.npn1c}
\int_{\hat{t}-\sigma}^{\hat{t}+\sigma} \int_0^1 \varphi(x) \, n_M(t,x) \,{\rm d}x \, {\rm d}t = \lim_{\e \rightarrow 0} \int_{\hat{t}-\sigma}^{\hat{t}+\sigma} \int_0^1 \varphi(x) \, n_{M \e}(t,x) \,{\rm d}x \, {\rm d}t.
\end{equation}
Hence, using~\eqref{e.intn30} and~\eqref{e.npn1c} we conclude that
$$
\int_{\hat{t}-\sigma}^{\hat{t}+\sigma} \int_0^1 \varphi(x) \, n_M(t,x) \, {\rm d}x \, {\rm d}t = 0
$$
for every smooth test function that satisfies~\eqref{a.testfunc}. Therefore, the result~\eqref{eq:n=0ae_C3} for $i=M$ is verified.
\\\\
{\bf Part 5 -- Step 3.} We prove that
\begin{equation}
\label{eq:step3a_C3}
\max_{x \in [0,1]} R_1(t,x)=0 \; \mbox{ for any } t \in [0,T],
\end{equation}
\begin{equation}
\label{eq:step3c_C3}
\rho_1(t) > 0 \; \mbox{ a.e. on } \; [0,T],
\end{equation}
and
\begin{equation}
\label{eq:step3b_C3}
\argmax{x \in [0,1]} R_1(t,x) = \argmax{x \in [0,1]} a_1(x) \; \mbox{ for any } t \in [0,T].
\end{equation}
We begin by noting that, since $p_1 > 0$ on $[0,1]$, if 
$$
\displaystyle{\max_{x \in [0,1]} R_1(t,x) = 0}
$$ 
for some $t \in [0,T]$ then [\emph{vid.} the definition~\eqref{eq:R_C3} of the function $R_1$]
$$
\max_{x \in [0,1]} \int_0^t \left(\frac{2 \, a_{1}(x)}{1 + K \rho_{M}(s)} - 1 \right) \, {\rm d}s = 0
$$
and, therefore, 
\begin{equation}
\label{eq:hatx_C3}
\argmax{x \in [0,1]} R_1(t,x) = \argmax{x \in [0,1]} a_1(x).
\end{equation}
Hence, if~\eqref{eq:step3a_C3} holds true then~\eqref{eq:step3b_C3} is verified. 

To prove~\eqref{eq:step3a_C3} and \eqref{eq:step3c_C3} we proceed as follows. Assume by contradiction that there exist $\hat{t} \in [0,T)$ and $\sigma > 0$ with  $\hat{t}+\sigma \leq T$ such that
\begin{equation}
\label{eq:assmaxR_C3}
\max_{x \in [0,1]} R_1(t,x) = 0 \quad \forall t \in [0,\hat{t}] \; \mbox{ and } \; \rho_1(t) > 0 \; \mbox{ a.e. on } \; [0,\hat{t}]
\end{equation}
whereas 
\begin{equation}
\label{eq:assmaxR2_C3}
\max_{x \in [0,1]} R_1(t,x) < 0 \quad \forall t \in (\hat{t},\hat{t}+\sigma).
\end{equation}
Under assumptions \eqref{eq:assmaxR_C3} and \eqref{eq:assmaxR2_C3}, the result~\eqref{eq:n=0ae_C3} on $n_M(t,x)$ allows one to conclude that
\begin{equation}\label{eq:implrho2_C3}
\rho_M(t) = 0 \; \mbox{ for a.e.} \; t \in (\hat{t},\hat{t}+\sigma).
\end{equation}
We take $\displaystyle{\hat{x} \in \argmax{x \in [0,1]} R_1(\hat{t},x)}$. Using~\eqref{eq:implrho2_C3} we find that under assumptions \eqref{eq:assmaxR_C3} and \eqref{eq:assmaxR2_C3}
$$
R_1(\hat{t}+\sigma,\hat{x}) = \int_{0}^{\hat{t}+\sigma} P_1(\rho_M(t),\hat{x}) \,{\rm d}t = R_1(\hat{t},\hat{x}) + \int_{\hat{t}}^{\hat{t}+\sigma} P_1(0,\hat{x}) \,{\rm d}t, 
$$
that is,
$$
R_1(\hat{t}+\sigma,\hat{x}) = \int_{\hat{t}}^{\hat{t}+\sigma} p_1(\hat{x}) \, \Big[2 \, a_1(\hat{x}) - 1 \Big] \,{\rm d}t.
$$
Due to assumption \eqref{ass.a} this yields $R_1(\hat{t}+\sigma,\hat{x}) > 0$, which contradicts the result~\eqref{eq:step1_C3} on $R_1$. Hence, 
$$
\max_{x \in [0,1]} R_1(t,x)=0 \; \text{ for any } \; t \in (\hat{t},\hat{t}+\sigma) \; \text{ and } \; \rho_1(t) > 0 \; \text{ a.e on } (\hat{t},\hat{t}+\sigma)
$$
as well. Therefore, the results~\eqref{eq:step3a_C3} and \eqref{eq:step3c_C3} are verified. Hence, both the result~\eqref{eq:step3a_C3th1} and, under the assumption~\eqref{e.mod3IC} on $n_1^0$, the result~\eqref{eq:suppn1_C3} on the limit $n_1$ hold.
\\\\
{\bf Part 5 -- Step 4.} To prove the result~\eqref{eq:suppn1_C3} on the limit $n_2$ we proceed as follows. The weak convergence result~\eqref{eq:nle_C3} for $n_{1 \e}$ along with the result~\eqref{eq:suppn1_C3} on the limit $n_1$ and the fact that $Q_{1}(\rho_{M \, \e}(t),x) > 0$ on $[0,T] \times [0,1]$ ensures that for all $\tau \in [0,T)$ and for all $\sigma > 0$ with $\tau + \sigma \leq T$ we have
\begin{equation}
\label{new1}
 \lim_{\e \rightarrow 0} \int_{\tau}^{\tau+\sigma} \int_{0}^{1} Q_{1}(\rho_{M \, \e}(t),x) \, n_{1 \e}(t,x) \, {\rm d}x \, {\rm d}t > 0.
\end{equation}
Moreover, integrating over the set $[\tau,\tau+\sigma] \times [0,1]$ both sides of the IDE~\eqref{e.mod3} for $n_{2 \e}$ we find
\begin{eqnarray*}
&& \e \left[\int_0^1 n_{2 \e}(\tau+\sigma,x) \, {\rm d}x - \int_0^1 n_{2 \e}(\tau,x) \, {\rm d}x \right] \\
&&  \phantom{aaaa} = \int_{\tau}^{\tau+\sigma} \int_0^1 Q_{1}(\rho_{M \, \e}(t),x) \, n_{1 \e}(t,x) \, {\rm d}x \, {\rm d}t \\
&&  \phantom{aaaaaaa} + \int_{\tau}^{\tau+\sigma} \int_0^1 P_{2}(\rho_{M \, \e}(t),x) \, n_{2 \e}(t,x) \, {\rm d}x \, {\rm d}t
\end{eqnarray*}
and, since the upper bound~\eqref{eq:UBfinal} for $\rho_{2 \e}$ ensures that
$$
\lim_{\varepsilon \rightarrow 0} \e \left[\int_0^1 n_{2 \e}(\tau+\sigma,x) \, {\rm d}x - \int_0^1 n_{2 \e}(\tau,x) \, {\rm d}x \right]  = 0,
$$
we conclude that
\begin{eqnarray*}
&& \lim_{\e \rightarrow 0} \int_{\tau}^{\tau+\sigma} \int_0^1 Q_{1}(\rho_{M \, \e}(t),x) \, n_{1 \e}(t,x) \, {\rm d}x \, {\rm d}t  \\
&&  \phantom{aaaa} + \lim_{\e \rightarrow 0} \int_{\tau}^{\tau+\sigma} \int_0^1 P_{2}(\rho_{M \, \e}(t),x) \, n_{2 \e}(t,x) \, {\rm d}x \, {\rm d}t = 0.
\end{eqnarray*}
This along with~\eqref{new1} implies that
\begin{equation}
\label{e.P2en2enew}
\lim_{\e \rightarrow 0} \int_{\tau}^{\tau+\sigma} \int_{0}^{1} P_{2}(\rho_{M \, \e}(t),x) \, n_{2 \e}(t,x) \, {\rm d}x \, {\rm d}t < 0.
\end{equation}
Coherently with the fact that $P_{2}(\rho_{M \, \e}(t),x)<0$ on $[0,1]$ for a.e. $t \in (0,T)$ when $\e \to 0$, the result given by~\eqref{e.P2en2enew} yields
\begin{equation}
\label{e.intn20new}
\lim_{\e \rightarrow 0} \int_{\tau}^{\tau+\sigma} \int_{0}^{1} n_{2 \e}(t,x) \, {\rm d}x \, {\rm d}t > 0.
\end{equation}
This along with the calculations carried out in Part 5 -- Step 2 allow us to conclude that the result~\eqref{eq:suppn1_C3} on the limit $n_2$ is verified. The results~\eqref{eq:suppn1_C3} for the limits $n_i$ with $i=3,\ldots,M$ can be proved in a similar way through a bootstrap argument. 
\begin{flushright}
\hfill $\Box$
\end{flushright}
\end{proof}

The general asymptotic result established by Theorem \ref{th:case3} put on a rigorous mathematical basis the idea that clonal selection is driven by the self-renewal fraction of leukemic stem cells, as exemplified by Corollaries \ref{cor3}-\ref{cor4} given in Section~\ref{subsec42}. 

\subsection{Biological implications of Theorem \ref{th:case3}}
\label{subsec42}
Building on the ideas presented in previous studies \citep{StiehlBaranHoMarciniakCR,StiehlMCM,StiehlMMNP}, here we focus on the case where there are $M=3$ possible maturation stages, that is, stem cells ($i=1$), progenitor cells ($i=2$) and mature cells/leukemic blasts ($i=3$). 
 
In this case, Corollary \ref{cor3} of Theorem \ref{th:case3} shows that if
\begin{equation}
\label{ass.acor3}
\argmax{x \in [0,1]} a_1(x) = \big\{\overline{x}\big\}
\end{equation}
then in the limit $\e \rightarrow 0$ the population density functions of stem cells $n_{1 \e}(t,x)$, progenitor cells $n_{2 \e}(t,x)$ and mature cells/leukemic blasts $n_{3 \e}(t,x)$ become concentrated as Dirac masses centred at the point $\overline{x}$. Analogously, Corollary~\ref{cor5} of Theorem \ref{th:case3} shows that if
\begin{equation}
\label{ass.acor5}
\argmax{x \in [0,1]} a_1(x) = \{\overline{x}_1, \dots, \overline{x}_N\},
\end{equation}
then when $\e \rightarrow 0$ the cell population density functions $n_{1 \e}(t,x)$, $n_{2 \e}(t,x)$ and $n_{3 \e}(t,x)$ become concentrated as weighted sums of Dirac masses centred at the points $\{\overline{x}_1, \dots, \overline{x}_N\}$. In both cases, the centres of the Dirac masses do not depend on the functions $p_1(x)$, $a_2(x)$ and $p_2(x)$. 

From a biological point of view, the centres of the Dirac masses can be understood as the leukemic clones that are selected for in the limit of many cell generations. Therefore, the asymptotic results of Corollary \ref{cor3} and Corollary \ref{cor5} formalise the idea that clonal selection is controlled by the self-renewal fraction of leukemic stem cells, as the leukemic clone(s) with the highest stem cell self-renewal fraction are ultimately selected. 

The results of Corollary~\ref{cor3} and Corollary~\ref{cor5} are complemented by Corollary~\ref{cor4} of Theorem \ref{th:case3}, which shows that if the function $a_1(x)$ is constant, \emph{i.e.} if 
\begin{equation}
\label{ass.acor4ap}
a_1(x) =  A_1 \in \mathbb{R}_+ \; \mbox{ with } \; \frac{1}{2} < a_2(x) < A_1 <1 \; \mbox{ for all } \; x \in [0,1],
\end{equation}
then in the asymptotic regime $\e \rightarrow 0$ the cell population density functions $n_{1 \e}(t,x)$, $n_{2 \e}(t,x)$ and $n_{3 \e}(t,x)$ do not become concentrated as Dirac masses. Biologically, this indicates that if the stem cell self-renewal fraction is the same for all leukemic clones, then clonal selection will not occur and no specific clones will be selected. 

\begin{cor}[Selection of one single clone] 
\label{cor3}
Assume $M=3$ and let the assumptions of Theorem \ref{th:case3} along with the additional assumption~\eqref{ass.acor3} hold. Then, the measures $n_1$, $n_2$ and $n_3$ given by Theorem \ref{th:case3} are such that
\begin{equation}\label{eq:cor3}
n_i(t,x) = \rho_i(t) \, \delta(x - \overline{x}) \; \mbox{ for a.e. } t \in [0,T], \; \mbox{ for } \; i=1,2,3.
\end{equation}
\end{cor}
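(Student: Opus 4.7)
The plan is to derive Corollary \ref{cor3} directly from the support characterisation \eqref{eq:suppn1_C3} already furnished by Theorem \ref{th:case3}, without redoing any asymptotic analysis. Specialising the theorem to $M=3$ gives, for a.e.\ $t\in[0,T]$ and for $i=1,2,3$,
\begin{equation*}
\supp\bigl(n_i(t,\cdot)\bigr) \;=\; \argmax_{x\in[0,1]} a_1(x).
\end{equation*}
Under the additional hypothesis \eqref{ass.acor3} the right-hand side reduces to the singleton $\{\overline{x}\}$, so each $n_i(t,\cdot)$ is a non-negative Radon measure on $[0,1]$ whose support is the single point $\overline{x}$.

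Next I would invoke the standard fact that a non-negative Radon measure on a locally compact Hausdorff space whose support is a single point $\{\overline{x}\}$ must coincide with $c\,\delta_{\overline{x}}$ for some constant $c\geq 0$. Concretely, if $\mu$ is such a measure, then for every Borel set $B\subset[0,1]\setminus\{\overline{x}\}$ one has $\mu(B)=0$ (the support is closed and $\overline{x}\notin\overline{B}$ ensures there is an open neighbourhood of every point of $B$ of zero measure; by Lindel\"of/$\sigma$-compactness of $[0,1]\setminus\{\overline{x}\}$ these covers give $\mu(B)=0$), and hence $\mu=\mu(\{\overline{x}\})\,\delta_{\overline{x}}$. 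Applying this to $\mu=n_i(t,\cdot)$ for each fixed $t$ (outside a null set) yields $n_i(t,x)=c_i(t)\,\delta(x-\overline{x})$ for some $c_i(t)\geq 0$.

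Finally I would identify the weight $c_i(t)$ by pairing $n_i(t,\cdot)$ with the constant test function $\varphi\equiv 1$:
\begin{equation*}
c_i(t) \;=\; \int_0^1 \delta(x-\overline{x})\,c_i(t)\,\mathrm{d}x \;=\; \int_0^1 n_i(t,x)\,\mathrm{d}x \;=\; \rho_i(t),
\end{equation*}
which gives precisely \eqref{eq:cor3}. The only subtlety is the measure-theoretic step of concluding that a non-negative measure with a one-point support is a multiple of the Dirac mass; everything else is a direct transcription of Theorem \ref{th:case3}.
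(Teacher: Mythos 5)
Your proposal is correct and follows exactly the route the paper takes: the paper's proof simply declares \eqref{eq:cor3} a ``straightforward consequence'' of the support characterisation \eqref{eq:suppn1_C3}, and you have merely filled in the routine measure-theoretic details (a non-negative Radon measure supported at a single point is a multiple of the Dirac mass, with weight equal to its total mass $\rho_i(t)$). Nothing further is needed.
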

\begin{proof}
For $M=3$, under the additional assumption~\eqref{ass.acor3}, the result given by~\eqref{eq:cor3} is a straightforward consequence of the general asymptotic result~\eqref{eq:suppn1_C3}.
\begin{flushright}
\hfill $\Box$
\end{flushright}
\end{proof}

\begin{cor}[Selection of multiple clones] 
\label{cor5}
Assume $M=3$ and let the assumptions of Theorem \ref{th:case3} along with the additional assumption~\eqref{ass.acor5} hold. Then, the measures $n_1$, $n_2$ and $n_3$ given by Theorem \ref{th:case3} are such that
\begin{equation}
\label{eq:cor5}
n_{i}(t,x) = \sum_{j=1}^N  \rho_{ij}(t) \delta(x - \overline{x}_j)  \quad \mbox{for a.e. } \; t \in [0,T], \; \mbox{ for } \; i=1,2,3.
\end{equation}
\end{cor}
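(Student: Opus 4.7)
The plan is to derive Corollary~\ref{cor5} directly from the support identity~\eqref{eq:suppn1_C3} already established in Theorem~\ref{th:case3}. Specializing to $M=3$ and invoking the additional assumption~\eqref{ass.acor5}, that identity gives
\[
\supp(n_i(t,\cdot)) = \{\overline{x}_1, \ldots, \overline{x}_N\} \quad \text{for a.e.\ } t \in [0,T], \; i=1,2,3.
\]
The entire task is then to observe that a non-negative Radon measure on $[0,1]$ whose support is a finite set of $N$ points must be a non-negative linear combination of the Dirac masses centred at those points.

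Concretely, for a.e.\ $t$ I would define $\rho_{ij}(t) := n_i(t,\cdot)(\{\overline{x}_j\}) \geq 0$ and verify that for every $\varphi \in C([0,1])$ the identity
\[
\langle n_i(t,\cdot),\varphi\rangle = \sum_{j=1}^N \rho_{ij}(t)\,\varphi(\overline{x}_j)
\]
holds, which is precisely~\eqref{eq:cor5} in the distributional sense. The standard way to justify this is to pick disjoint smooth bumps $\chi_j$ supported in small neighbourhoods of the $\overline{x}_j$ with $\chi_j(\overline{x}_k)=\delta_{jk}$, decompose $\varphi = \sum_j \varphi(\overline{x}_j)\chi_j + \psi$ where $\psi$ vanishes on $\supp(n_i(t,\cdot))$, and read off the two contributions separately, using that $\langle n_i(t,\cdot),\psi\rangle = 0$ by the support property and $\langle n_i(t,\cdot),\chi_j\rangle = \rho_{ij}(t)$ by construction.

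Because the conclusion is essentially a structural fact about finitely supported Radon measures, no further asymptotic analysis is required and I do not expect any real obstacle beyond what is already contained in Theorem~\ref{th:case3}. The only mild subtlety is the measurability of the coefficients $t \mapsto \rho_{ij}(t)$, which is inherited from the weak-$*$ measurability encoded in $n_i \in w^*-{\rm L}^{\infty}((0,T),{\rm M}^1([0,1]))$ given by~\eqref{eq:nle_C3} and makes the expansion~\eqref{eq:cor5} meaningful pointwise a.e.\ in $t$.
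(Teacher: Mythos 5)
Your proposal is correct and follows essentially the same route as the paper, which simply notes that \eqref{eq:cor5} is a straightforward consequence of the support identity \eqref{eq:suppn1_C3} from Theorem~\ref{th:case3}. You merely spell out the standard fact that a non-negative Radon measure supported on the finite set $\{\overline{x}_1,\dots,\overline{x}_N\}$ is a non-negative combination of the corresponding Dirac masses, which the paper leaves implicit.
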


\begin{proof}
For $M=3$, under the additional assumption~\eqref{ass.acor5}, the result given by~\eqref{eq:cor5} is a straightforward consequence of the general asymptotic result~\eqref{eq:suppn1_C3}.
\begin{flushright}
\hfill $\Box$
\end{flushright}
\end{proof}

\begin{cor}[Absence of clonal selection]
\label{cor4}
Assume $M=3$ and let the assumptions of Theorem \ref{th:case3} along with the additional assumption~\eqref{ass.acor4ap} hold. Moreover, let $M=3$. Then, the measures $n_1$, $n_2$ and $n_3$ given by Theorem \ref{th:case3} are such that
\begin{equation}\label{eq:cor4c}
\supp(n_i(t, \cdot)) = [0,1]  \; \mbox{ for a.e. }  t \in [0,T], \; \mbox{ for } \; i=1,2,3.
\end{equation}
\end{cor}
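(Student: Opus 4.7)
The plan is to derive Corollary~\ref{cor4} as a direct consequence of the general asymptotic result~\eqref{eq:suppn1_C3} established in Theorem~\ref{th:case3}, in the same spirit as the proofs already given for Corollaries~\ref{cor3} and~\ref{cor5}. First I would verify that the full set of hypotheses of Theorem~\ref{th:case3} is in force under the additional assumption~\eqref{ass.acor4ap}: the regularity requirement~\eqref{ass.a} on $a_1$ is trivial since $a_1(x)=A_1$ is a constant in $\left(\tfrac12,1\right)$, while~\eqref{ass.a} for $i=2$ and~\eqref{ass.p} are inherited from the hypotheses of Theorem~\ref{th:case3}, and the strict ordering $\tfrac12<a_2(x)<A_1<1$ in~\eqref{ass.acor4ap} is exactly the statement of~\eqref{ass.a1i} with $M=3$.

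Next I would observe that constancy of $a_1$ forces every point of $[0,1]$ to be a maximiser, i.e.
\begin{equation*}
\argmax{x \in [0,1]} a_1(x) \;=\; [0,1].
\end{equation*}
Applying~\eqref{eq:suppn1_C3} from Theorem~\ref{th:case3} with $M=3$ then yields $\supp(n_i(t,\cdot))=[0,1]$ for a.e.\ $t \in [0,T]$ and for $i=1,2,3$, which is precisely~\eqref{eq:cor4c}.

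I do not expect any real obstacle: the entire analytical content has already been carried out in the proof of Theorem~\ref{th:case3}, in particular the identification in \textbf{Part~5 -- Step~3} of $\argmax_x R_1(t,x)$ with $\argmax_x a_1(x)$ via the representation of $R_1$ coming from the definition~\eqref{eq:R_C3} and the positivity of $p_1$. Under~\eqref{ass.acor4ap} the function $R_1(t,\cdot)$ is itself independent of $x$, so the equality $\argmax_x R_1(t,x)=[0,1]$ is immediate and the support statement drops out without any further asymptotic argument. Thus the proof will consist of one short paragraph citing~\eqref{eq:suppn1_C3} and noting the trivial identification of the argmax.
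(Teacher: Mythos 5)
Your proposal is correct and follows exactly the paper's own route: the authors likewise derive \eqref{eq:cor4c} as an immediate consequence of \eqref{eq:suppn1_C3}, since the constancy of $a_1$ under \eqref{ass.acor4ap} gives $\argmax{x \in [0,1]} a_1(x) = [0,1]$. The additional hypothesis-checking you include is sound but not spelled out in the paper, which states the corollary as a straightforward consequence of the theorem.
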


\begin{proof}
For $M=3$, under the additional assumption~\eqref{ass.acor4ap}, the result given by~\eqref{eq:cor4c} is a straightforward consequence of the general asymptotic result~\eqref{eq:suppn1_C3}.
\begin{flushright}
\hfill $\Box$
\end{flushright}

\end{proof}

\section{Numerical solutions}
We integrate the asymptotic results established by Corollaries~\ref{cor3}~-~\ref{cor4} of Theorem \ref{th:case3} with numerical solutions of the Cauchy problem defined by the system of IDEs~\eqref{e.mod3o} with $M=3$ complemented with biologically relevant initial conditions (\emph{vid.} Section \ref{subsec:resu}). We parameterise the model based on patient data from the existing literature (\emph{vid.} Section \ref{subsec:modparam}).

\subsection{Setup of numerical simulations and model calibration}\label{subsec:modparam}
To construct numerical solutions, we choose a uniform discretisation of the interval $[0,1]$ that consists of 1000 points. We assume $t \in [0,T]$ and we approximate the IDE system for the population density functions $n_1(t,x)$, $n_2(t,x)$ and $n_3(t,x)$ using the forward Euler method with step size $10^{-4}$. We choose $T=10^{4}$. Under the parameter settings considered here, such a value of $T$ corresponds to a time frame of approximatively 30 years after the appearance of the first leukemic clones. This is biologically reasonable since acute leukemia has a pronounced peak of incidence in late adulthood. 

Numerical computations are performed in {\sc Matlab} for two main parameter settings: one under which the total cell density functions $\rho_{1}(t)$, $\rho_{2}(t)$ and $\rho_{3}(t)$ converge to some stable values ({\it vid.} Section~\ref{subsec:modparamori}), and the other such that the total cell density functions undergo oscillations ({\it vid.} Section~\ref{subsec:modparamhopf}).

\subsubsection{Model calibration 1}\label{subsec:modparamori}
To model an initial scenario whereby, due to clonal heterogeneity, all possible leukemic clones are present in small numbers in every maturation compartment, and the total densities of healthy cells are close to the cell counts at physiological equilibrium, we use the following initial data 
\begin{equation}
\label{e.param8}
n^0_i(x) = N_i \exp\left( -\frac{x^2}{0.2}\right) \; \mbox{ for } \; i=1,2,3,
\end{equation}
with 
\begin{equation}
\label{e.param8b}
N_1 \approx 2.5 \times 10^7, \quad N_2 \approx 3.8 \times 10^9 \quad \text{and} \quad N_3 \approx 10^8.
\end{equation}
Such initial conditions satisfy assumptions~\eqref{e.mod3IC}.

Multi-compartment models of hematopoiesis after bone marrow transplantation allow to estimate the proliferation rates and the self-renewal fractions of healthy stem and progenitor cells, as well as the parameters of the feedback signal \citep{StiehlBaranHoMarciniakCR,StiehlBMT}. In particular, coherently with the estimations performed in~\citep{StiehlBMT}, we assume that
\begin{equation}\label{e.param1}
a_1(0)=0.85, \;\; a_2(0)=0.84, \;\; p_1(0)=0.1/day, \;\; p_2(0)=0.4/day 
\end{equation}
and
\begin{equation}\label{e.param1b}
K=1.75 \times 10^{-9}kg/cell.
\end{equation}
Clearance rates of mature cells and leukemic blasts can be estimated based on patient data and they appear to be between $0.1/day$ and $2.3/day$~\citep{Cartwright,Malinowska,Savitskiy}. For this reason, we choose 
\begin{equation}\label{e.param2}
d=2/day.
\end{equation}
In agreement with the estimations performed in~\citep{StiehlBaranHoMarciniakCR,StiehlBMT}, which are based on clinical data of blast dynamics in relapsing patients, we impose the following conditions 
\begin{equation}\label{e.param3}
0.85 \leq a_1(x) \leq 0.99 \; \mbox{ and } \; 0.1/day \, \leq p_1(x) < 1/day \; \mbox{ for all } \; x \in (0,1].
\end{equation}
The values of the function $a_2(x)$ are constrained by assumption \eqref{ass.a1i}. Moreover, since it is well accepted that stem cells divide at lower rates compared to progenitor cells~\citep{Adams,Cronkite,Shepherd}, we impose the following additional conditions
\begin{equation}\label{ass.p12}
0 < p_1(x) \leq p_2(x) < 1 \quad \mbox{for all } \; x \in [0,1].
\end{equation}

We let the continuous structuring variable $x$ parameterise the cell proliferation rate and we use of the following definitions 
\begin{equation}\label{e.param4}
p_1(x) = \alpha_1 + \beta_1 \, x \quad \mbox{and} \quad p_2(x) = \alpha_2 + \beta_2 \, x.
\end{equation}
In order to fulfil conditions \eqref{e.param1}, we choose 
\begin{equation}\label{e.param4alpha}
\alpha_1=0.1/day \quad \text{and} \quad \alpha_2=0.4/day.
\end{equation}
Moreover, we choose 
\begin{equation}\label{e.param4beta}
\beta_1=0.2/day \quad \text{and} \quad \beta_2=0.5/day
\end{equation}
so that conditions~\eqref{e.param3} and \eqref{ass.p12} are satisfied. To take into account the complex relationship between proliferation and self-renewal observed in leukemic cells~\citep{Doulatov,Ghosh,Kikushige,Lagunas-Rangel,Wang2,Yassin}, we assume the correspondence between the self-renewal fraction and the cell proliferation rate to be non-bijective and among all possible definitions which satisfy conditions~\eqref{ass.a1i}, \eqref{e.param1} and \eqref{e.param3} we choose 
\begin{equation}\label{e.param5b}
a_2(x) = 0.5 \; \exp\left[-\frac{(x-0.4)^2}{8.82}\right] + 0.349
\end{equation}
and
\begin{equation}\label{e.param5}
a_1(x) = \exp\left[ -\frac{(x-0.6)^2}{9.68}\right] - 0.1135
\end{equation}
or
\begin{equation}\label{e.param6}
a_1(x) = a^0_1 + 0.1 \, \sum_{j=1}^4 \, \exp\left[ -\frac{(x-\overline{x}_j)^2}{0.0025}\right]
\end{equation}
with
\begin{equation}\label{e.param6b}
\overline{x}_j \in \{0.35, 0.55, 0.7, 0.85\} \quad \mbox{and} \quad a^0_1 = 0.85 - 0.1 \, \sum_{j=1}^4 \, \exp\left[ -\frac{\overline{x}_j^2}{0.0025}\right],
\end{equation}  
or
\begin{equation}\label{e.param7}
a_1(x) \equiv 0.88.
\end{equation}
Definition~\eqref{e.param5b} models a biological scenario where the leukemic clone identified by $x=0.4$ has the highest self-renewal fraction among progenitor cells. Moreover, definitions~\eqref{e.param5}, \eqref{e.param6}-\eqref{e.param6b}, and \eqref{e.param7} model, respectively, three distinct situations whereby, due to differential gene expression at different maturation stages:
\begin{itemize}
\item[-] the leukemic clone identified by $x=0.6$ has the highest self-renewal fraction among stem cells;
\item[]
\item[-] the leukemic clones corresponding to $x=0.35$, $x=0.55$, $x=0.7$ and $x=0.85$ have the highest self-renewal fraction among stem cells;
\item[]
\item[-] all leukemic clones in the stem-cell compartment have the same self-renewal fraction.
\end{itemize}

\subsubsection{Model calibration 2}\label{subsec:modparamhopf}
On the basis of considerations analogous to those presented in Section~\ref{subsec:modparamori}, we use the initial data~\eqref{e.param8} with 
\begin{equation}
\label{e.param8hopf}
N_1 \approx 4.37 \times 10^6, \quad N_2 \approx 5 \times 10^8, \quad N_3 \approx 4.28 \times 10^8
\end{equation}
and we choose
\begin{equation}\label{e.param1bhopf}
K=1.75 \times 10^{-9}kg/cell, \quad d=0.15/day.
\end{equation}
Moreover, we define the functions $p_1(x)$ and $p_2(x)$ according to~\eqref{e.param4} with 
\begin{equation}\label{e.param4hopfalpha}
\alpha_1=0.975/day, \quad \alpha_2=0.04/day
\end{equation}
and
\begin{equation}\label{e.param4hopfbeta}
\beta_1=0.025/day, \quad \beta_2=0.0333/day.
\end{equation}
Finally, we assume
\begin{equation}\label{e.param5hopf}
a_1(x) = \frac{0.7}{0.8865} \, \left\{\exp\left[ -\frac{(x-0.6)^2}{9.68}\right]-0.1135\right\}
\end{equation}
and
\begin{equation}\label{e.param5bhopf}
a_2(x) = \frac{0.6}{0.8467} \, \left\{0.5 \; \exp\left[-\frac{(x-0.4)^2}{8.82}\right] + 0.349\right\},
\end{equation}
\emph{i.e.} we consider a biological scenario whereby the leukemic clones identified by $x=0.6$ and $x=0.4$ have the highest self-renewal fraction among stem cells and progenitor cells, respectively. The parameters and functions~\eqref{e.param4hopfalpha}-\eqref{e.param5bhopf} are such that the values of the functions $p_1(x)$, $p_2(x)$, $a_1(x)$ and $a_2(x)$ at the point $x=0.6$ [\emph{i.e.} the maximum point of the function $a_1(x)$] coincide with the parameter values for which the solutions of the ODE system \eqref{ODE} with $M=3$ are known to undergo periodic  oscillations \citep{Knauer2012,Knaueretal2019}.

\subsection{Main results} \label{subsec:resu}
In agreement with the asymptotic results established by Corollary \ref{cor3} and Corollary \ref{cor5} of Theorem \ref{th:case3}, the numerical solutions presented in Fig.~\ref{fig:f11} and Fig.~\ref{fig:f21} show that, under the parameter setting given in Section~\ref{subsec:modparamori}, the population density functions of stem cells $n_1(t,x)$, progenitor cells $n_2(t,x)$ and mature cells/leukemic blasts $n_3(t,x)$ become progressively concentrated at the maximum point(s) of the function $a_1(x)$. Moreover, the plots in the insets of Fig.~\ref{fig:f11} and Fig.~\ref{fig:f21} highlight the existence of leukemic clones which grow transiently before becoming ultimately extinct. Finally, the numerical solutions displayed in Fig.~\ref{fig:f31} illustrate how, in agreement with the asymptotic results of Corollary~\ref{cor4} of Theorem~\ref{th:case3}, if the function $a_1(x)$ is constant, the long-term limits of the population density functions $n_1(t,x)$, $n_2(t,x)$ and $n_3(t,x)$ are not concentrated at any particular point. 

As mentioned earlier in the paper, these results communicate the biological notion that the self-renewal fraction of leukemic stem cells determines the fate of clonal selection. In fact, the leukemic clones characterised by the highest stem cell self-renewal fraction are selected, regardless of their properties in terms of stem cell proliferation rate, progenitor cell proliferation rate and progenitor cell self-renewal fraction. This supports the idea that interclonal variability in the self-renewal fraction of leukemic stem cells provides the necessary substrate for clonal selection to act upon. 
\begin{figure}
\centering
\includegraphics[width=1\linewidth]{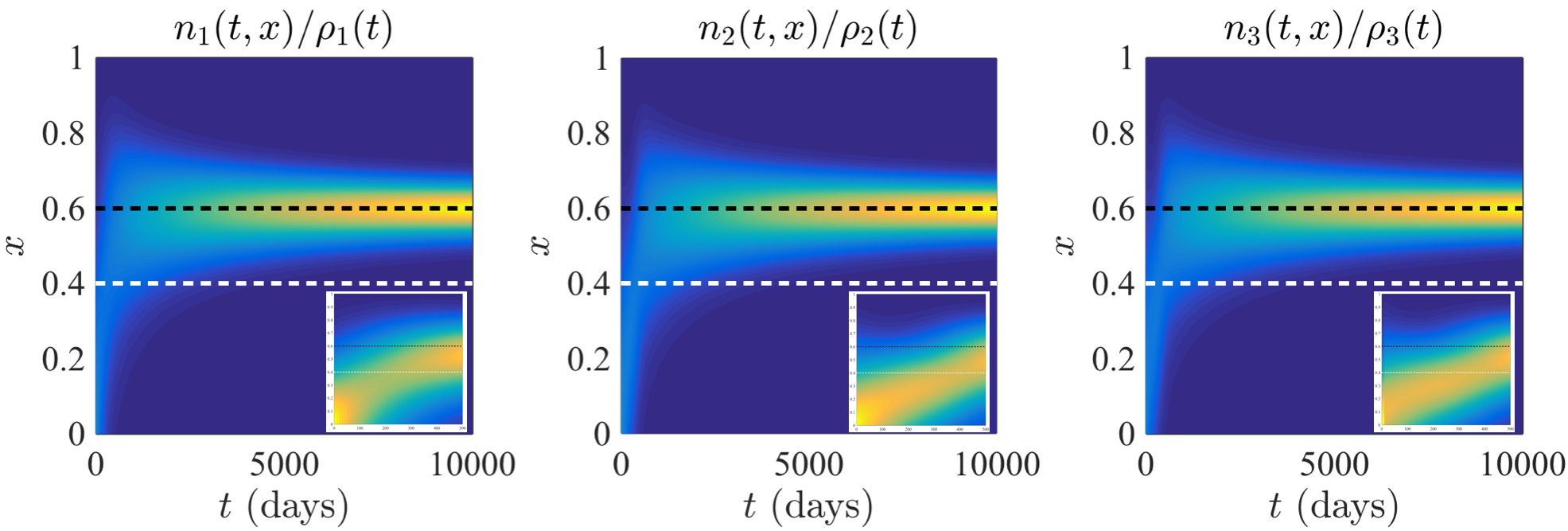}
\caption{{\bf Selection of one single clone.} Dynamics of the normalised cell population density functions $n_{1}(t,x)/\rho_1(t)$ (left panel), $n_{2}(t,x)/\rho_2(t)$ (central panel) and $n_{3}(t,x)/\rho_3(t)$ (right panel) under the parameter setting given in Section~\ref{subsec:modparamori} with $a_1(x)$ defined according to \eqref{e.param5}. The black lines highlight the clone with the maximum self-renewal fraction in the stem-cell compartment [\emph{i.e.} the maximum point of the function $a_1(x)$], while the white lines highlight the clone with the maximum self-renewal fraction in the progenitor-cell compartment [\emph{i.e.} the maximum point of the function $a_2(x)$]. The transient behaviour of the solutions from the initial conditions is displayed by the plots in the insets, which show the dynamics of the normalised population density functions for $t \in [0,500]$. The colour scale ranges from blue (low density) to yellow (high density).  
}
\label{fig:f11}
\end{figure}
\begin{figure}
\centering
\includegraphics[width=1\linewidth]{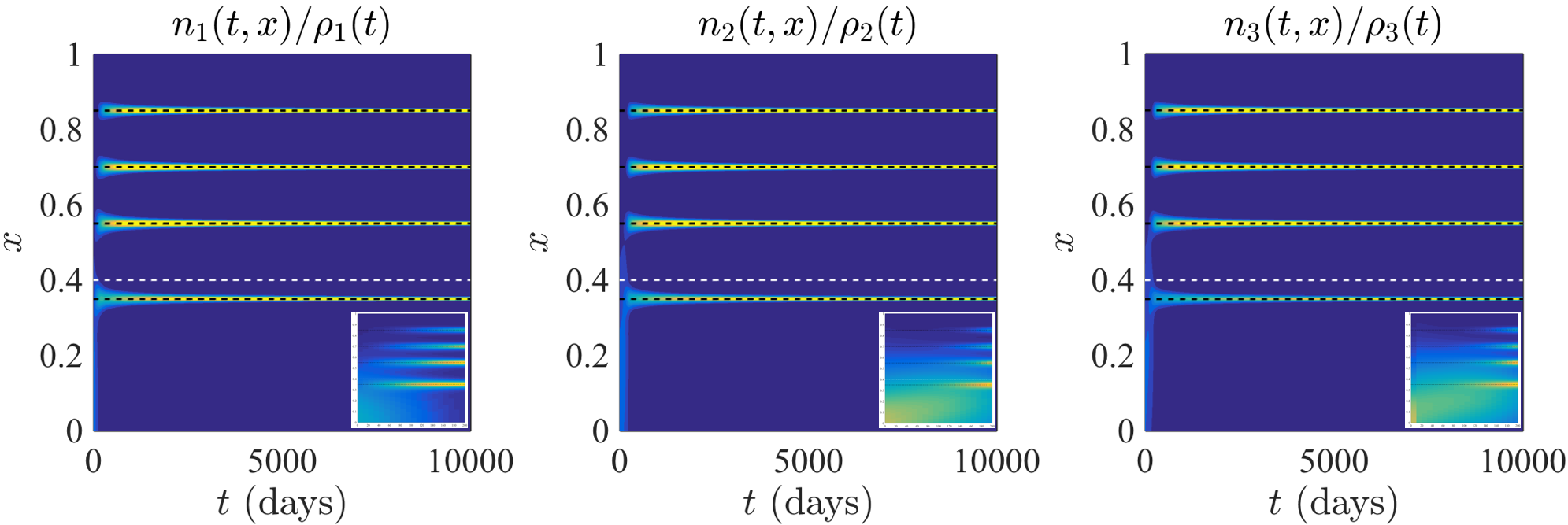}
\caption{{\bf Selection of multiple clones.} Dynamics of the normalised cell population density functions $n_{1}(t,x)/\rho_1(t)$ (left panel), $n_{2}(t,x)/\rho_2(t)$ (central panel) and $n_{3}(t,x)/\rho_3(t)$ (right panel) under the parameter setting given in Section~\ref{subsec:modparamori} with $a_1(x)$ defined according to~\eqref{e.param6} and \eqref{e.param6b}. The black lines highlight the clones with the maximum self-renewal fraction in the stem-cell compartment [\emph{i.e.} the maximum points of the function $a_1(x)$], while the white lines highlight the clone with the maximum self-renewal fraction in the progenitor-cell compartment [\emph{i.e.} the maximum point of the function $a_2(x)$]. The transient behaviour of the solutions from the initial conditions is displayed by the plots in the insets, which show the dynamics of the normalised population density functions for $t \in [0,200]$. The colour scale ranges from blue (low density) to yellow (high density).  
}
\label{fig:f21}
\end{figure}
\begin{figure}
\centering
\includegraphics[width=1\linewidth]{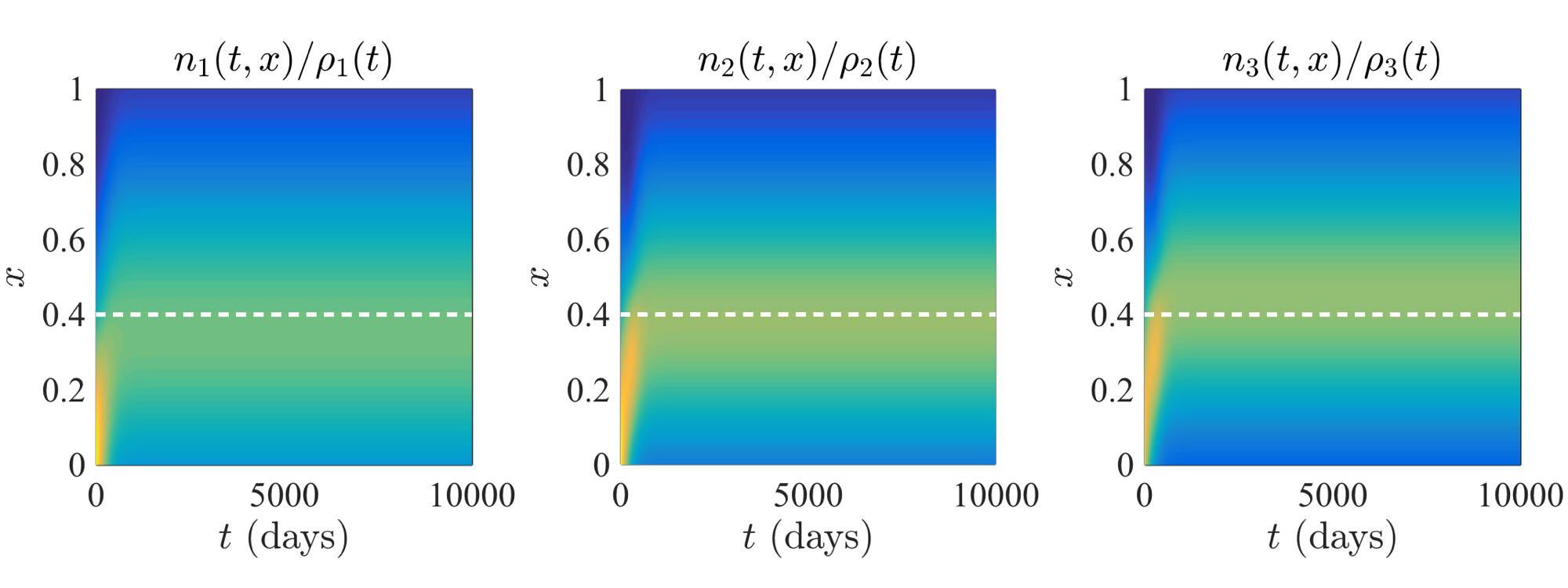}
\caption{{\bf Absence of clonal selection.} Dynamics of the normalised cell population density functions $n_{1}(t,x)/\rho_1(t)$ (left panel), $n_{2}(t,x)/\rho_2(t)$ (central panel) and $n_{3}(t,x)/\rho_3(t)$ (right panel) under the parameter setting given in Section~\ref{subsec:modparamori} with $a_1(x)$ defined according to~\eqref{e.param7}. The white lines highlight the clone with the maximum self-renewal fraction in the progenitor-cell compartment [\emph{i.e.} the maximum point of the function $a_2(x)$]. The colour scale ranges from blue (low density) to yellow (high density).  
}
\label{fig:f31}
\end{figure}
\begin{figure}
\centering
\includegraphics[width=1\linewidth]{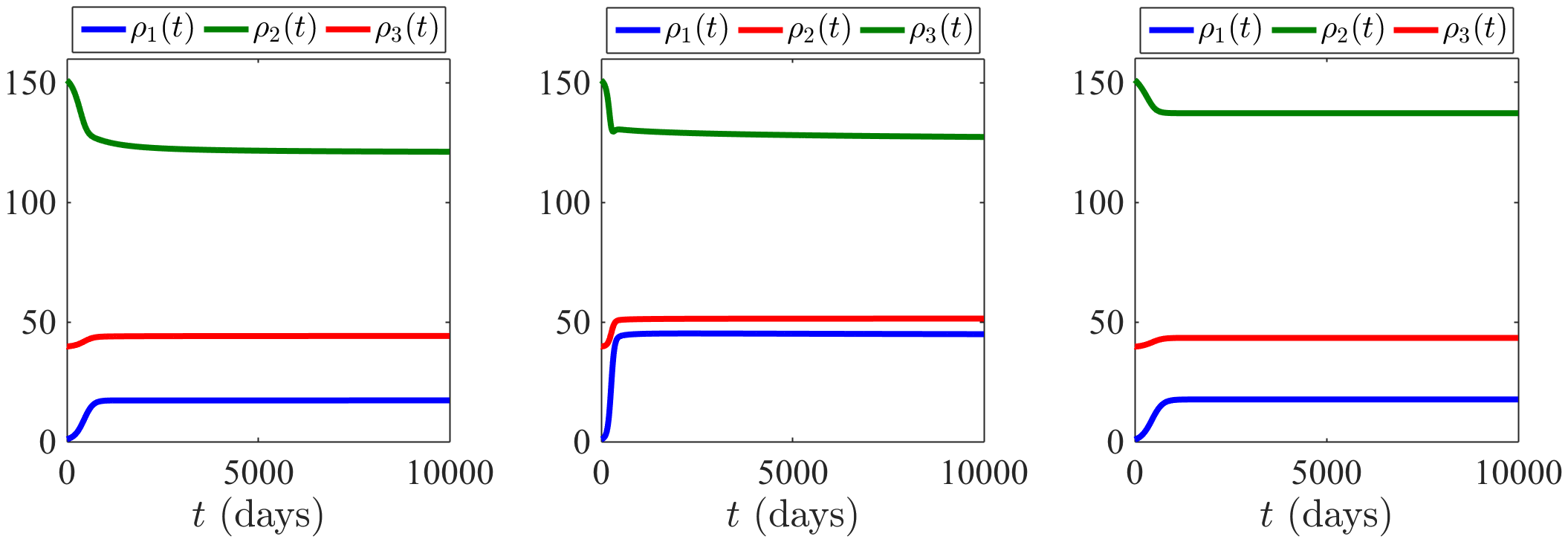}
\caption{{\bf Dynamics of the total cell densities.} {\it Left panel.} Dynamics of the total density of stem cells $\rho_{1}(t)$ (blue line), progenitor cells $\rho_{2}(t)$ (red line) and mature cells/leukemic blasts $\rho_{3}(t)$ (red line) under the parameter setting given in Section~\ref{subsec:modparamori} with $a_1(x)$ defined according to~\eqref{e.param5} (left panel), \eqref{e.param6}-\eqref{e.param6b} (central panel) and~\eqref{e.param7} (right panel). Values are in units of $10^7$.}
\label{fig:f1232}
\end{figure}

Under the parameter setting given in Section~\ref{subsec:modparamori}, the total density of stem cells $\rho_{1}(t)$, progenitor cells $\rho_{2}(t)$ and mature cells/leukemic blasts $\rho_{3}(t)$ converge to some stable values (\emph{vid.} Fig.~\ref{fig:f1232}). However, it is known that for given parameter choices the solutions of the ODE system \eqref{ODE} with $M=3$ undergo periodic oscillations, which result from the occurrence of Hopf bifurcation \citep{Knauer2012,Knaueretal2019}.  The numerical results presented in Fig.~\ref{fig:Hopf1rho} show that, under the parameter setting given in Section~\ref{subsec:modparamhopf}, oscillations emerge in the integrals of the solutions to the IDE system \eqref{e.mod3o} with $M=3$ [\emph{i.e.} in the total cell densities $\rho_1(t)$, $\rho_2(t)$ and $\rho_3(t)$]. In analogy with the previous cases, the numerical solutions presented in Fig.~\ref{fig:Hopf1n} indicate that the cell population density functions $n_1(t,x)$, $n_2(t,x)$ and $n_3(t,x)$ become progressively concentrated at the maximum point of the function $a_1(x)$ -- \emph{i.e.} the leukemic clone characterised by the highest stem cell self-renewal fraction is ultimately selected.
\begin{figure}
\centering
\includegraphics[width=1\linewidth]{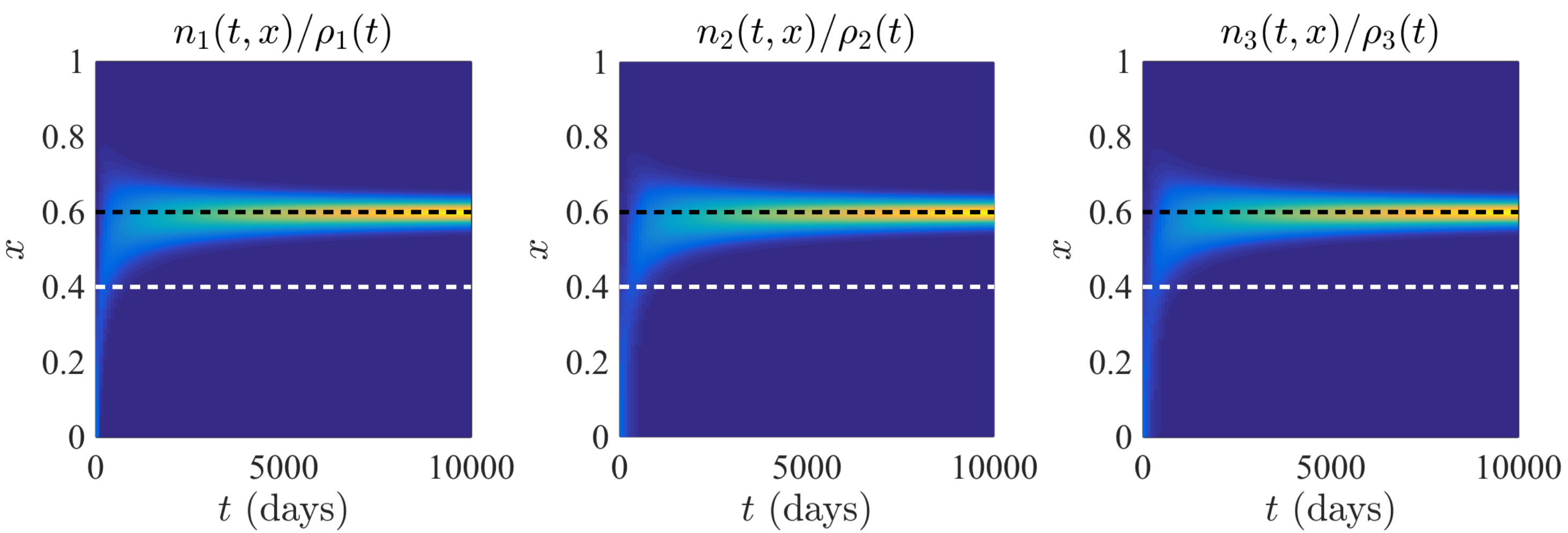}
\caption{{\bf Clonal selection when the total cell densities undergo oscillations.} Dynamics of the normalised cell population density functions $n_{1}(t,x)/\rho_1(t)$ (left panel), $n_{2}(t,x)/\rho_2(t)$ (central panel) and $n_{3}(t,x)/\rho_3(t)$ (right panel) under the parameter setting given in Section~\ref{subsec:modparamhopf}. The black lines highlight the clone with the maximum self-renewal fraction in the stem-cell compartment [\emph{i.e.} the maximum point of the function $a_1(x)$], while the white lines highlight the clone with the maximum self-renewal fraction in the progenitor-cell compartment [\emph{i.e.} the maximum point of the function $a_2(x)$]. The colour scale ranges from blue (low density) to yellow (high density).} 
\label{fig:Hopf1n}
\end{figure}
\begin{figure}
\centering
\includegraphics[width=1\linewidth]{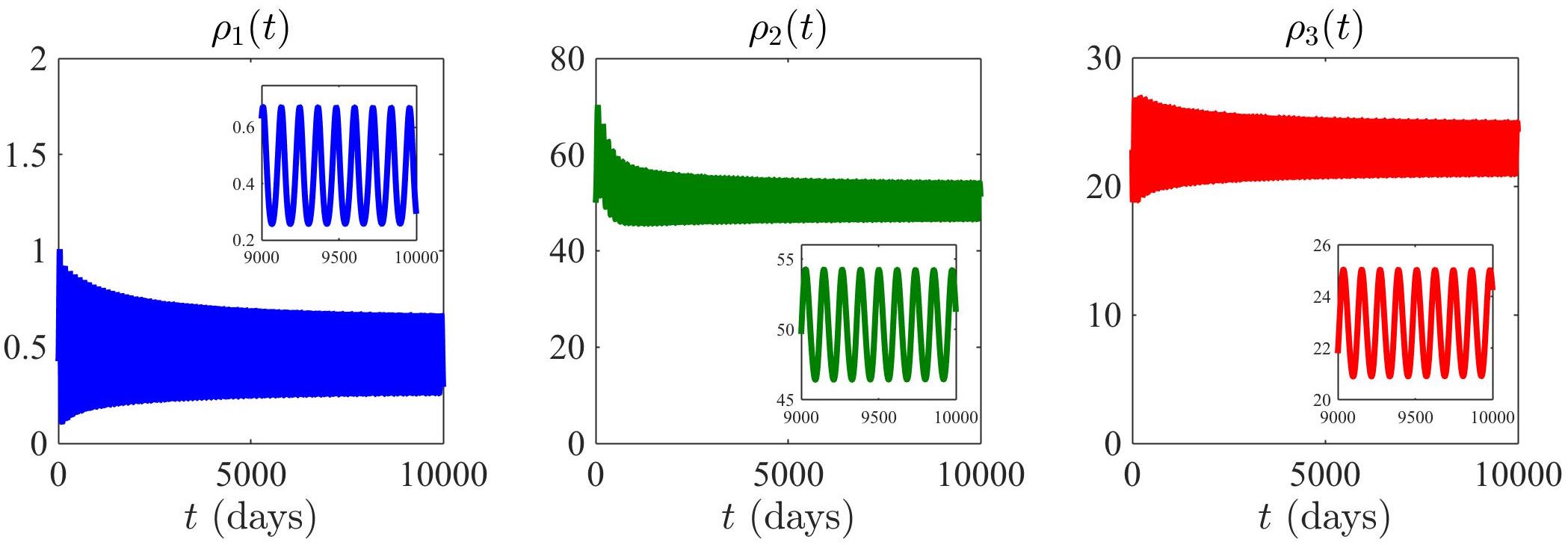}
\caption{{\bf Emergence of oscillations in the total cell densities.} Dynamics of the total density of stem cells $\rho_{1}(t)$ (blue line), progenitor cells $\rho_{2}(t)$ (red line) and mature cells/leukemic blasts $\rho_{3}(t)$ (red line) under the parameter setting given in Section~\ref{subsec:modparamhopf}. Values are in units of $10^7$.
}
\label{fig:Hopf1rho}
\end{figure}

\newpage
\section{Discussion and conclusions}
Recent progress in genetic techniques has shed light on the complex co-evolution of malignant cell clones in leukemias \citep{Anderson,Belderbos,Ding,Ley}. However, several aspects of clonal selection still remain unclear. In this work, we have adopted a mathematical modelling approach to study clonal selection in acute leukemias, with the aim of supporting a better understanding of the biological mechanisms which underpin observable clonal dynamics.

Our model consists of a system of coupled IDEs that describe the dynamics of cells at different maturation stages -- from stem cells through increasingly mature progenitor cells to mature-cells/blasts -- seen as distinct compartments. Each maturation compartment is structured by a continuous variable that identifies the clone of the cells. In order to incorporate into our model the high degree of interclonal heterogeneity which is observed in leukemia patients, we let the cellular proliferation rate and self-renewal fraction (\emph{i.e.} the fraction of progeny cells adopting the same fate as their parent cell) in each maturation compartment be functions of the structuring variable.

In the framework of this model, we have established a number of analytical results which give answers to the open questions {\bf Q1}-{\bf Q3} posed in the introduction of this paper. In summary:
\begin{itemize}
\item [{\bf A1}] {\it Clonal selection is driven by the self-renewal fraction of leukemic stem cells.} 
Theorem \ref{th:case3} rigorously shows that all leukemic clones with a non-maximal stem cell self-renewal fraction ultimately become extinct, independently of their proliferation rate.
\item [{\bf A2}] {\it Non-stem cell properties do not have a substantial impact on clonal selection.} The result established by Corollary \ref{cor4} formalises the idea that, in a scenario where the stem cells of all clones have the same self-renewal fraction, one should expect the stable coexistence of all clones to occur. 
\item [{\bf A3}] {\it Only the clones whose stem cells are endowed with the highest self-renewal fraction can stably coexist in the presence of interclonal heterogeneity.} Corollaries \ref{cor3} and \ref{cor5} put on a rigorous basis the notion that the leukemic clones whose stem cells have the highest self-renewal fraction are ultimately selected.
\end{itemize}

We have integrated our asymptotic results with numerical solutions of a calibrated version of the model based on real patient data. In agreement with the theoretical results by \citet{StiehlBaranHoMarciniak}, our numerical results reveal the existence of leukemic clones which display transient growth before becoming extinct. Such an emergent behaviour was not captured by the IDE model considered by \citet{Busse}, who studied the case where cells of different clones at the same maturation stage have the same proliferation rate. This suggests that interclonal heterogeneity in the cell proliferation rate may have an impact on transient clonal dynamics.   

It has been shown using genetic techniques that, in most leukemia patients, the majority of leukemic cells is derived from a relatively small number of clones \citep{Anderson,Ding}. Our results indicate that this may be due to the fact that only a few leukemic clones are characterised by a high stem cell self-renewal fraction. 

There is accumulating experimental evidence that many of the different genetic mutations involved in the development of leukemia increase the cell self-renewal. Possible examples include the TIM-3/Gal-9 autocrine stimulation \citep{Kikushige} or alterations of Wnt/$\beta$-Catenin signalling \citep{Wang2}. Moreover, it has been shown that genetic mutations can simultaneously affect the self-renewal and proliferation of cells due to crosstalk between different signalling pathways. For instance, the NUP98-D{\rm d}x10 oncogene increases both cell proliferation and self-renewal \citep{Yassin}; hyperactivation of the mTOR pathway leads to S6K1-mediated increase in self-renewal and reduction in proliferation \citep{Ghosh}; up-regulation of PLZF brings about increased self-renewal and reduced proliferation \citep{Doulatov}. The outcome of our analysis, which ascribes a pivotal role to increased self-renewal in orchestrating clonal selection, is in line with the observation that all the aforementioned genetic alterations lead to increased self-renewal, whereas they have divergent effects on cell proliferation.

The result that clonal selection is not influenced by progenitor cell properties is biologically meaningful, since it implies that mutations which affect only the properties of progenitor cells without altering the properties of stem cells cannot lead to the selection of leukemic clones. This result is new and not self-evident, as progenitor cells can expand independently of the influx from the stem cell compartment -- although they possess smaller self-renewal fractions than stem cells \citep{Roelofs,StiehlBMT,Yamamoto}. This insight is also clinically relevant. In fact, although it is known that progenitor cells are more sensitive to treatment interventions than stem cells, which are located in protective niches, our results suggest that manipulations of progenitor cells have no impact on clonal selection phenomena.

A rigorous mathematical understanding of clonal selection is needed since the potentially nonlinear interplay between different genetic and epigenetic hits \citep{Bacher,Gale,Heuser} leads to complex fitness landscapes and non-trivial interdependencies between self-renewal fraction and proliferation rate of leukemic cells. In particular, there is evidence that combinations of leukemic mutations occur frequently in patients \citep{Naoe,Pui}. An \emph{in silico} approach can help to disentangle the impact of different mutations on the properties of leukemic stem cells. In this regard, our modelling approach can be further developed in several directions. For instance, in line with what was done for the ODE counterpart of the model presented here \citep{StiehlBaranHoMarciniak,StiehlBD,StiehlSciRep}, we plan to extend our model to take into account the effect of multiple feedback mechanisms and incorporate the occurrence of \emph{de novo} mutations. Moreover, along the lines of the modelling method proposed by Doumic \emph{et al.} \citep{Doumic,GJMC2012}, it may be interesting to replace the discrete maturation structure considered in this work by a continuous age structure, which would lead to the definition of a fully-continuously structured population model of clonal selection in acute leukemias. From a mathematical point of view, we also plan to carry out a systematic investigation of the conditions for the emergence of oscillations in the total cell densities, as shown by our numerical solutions.  

\newpage
\appendix
\section{Multi-compartmental ODE model}
\label{sec:ode}
In a number of previous papers \citep{StiehlBaranHoMarciniak,StiehlBaranHoMarciniakCR,StiehlSciRep,StiehlBD,StiehlMMNP}, it was shown that mathematical models defined in the framework of the following ODE system can effectively recapitulate clinical data from leukemia patients
\begin{equation}
\label{ODE}
\left\{
\begin{array}{ll}
\displaystyle{\frac{{\rm d}}{{\rm d} t} N^j_1(t) =  \left(\frac{2 \, a^j_{1}}{1 + K \, Z_M(t)} - 1 \right) \, p^j_1 \, N^j_1(t),}
\\\\
\displaystyle{\frac{{\rm d}}{{\rm d} t} N^j_i(t) = 2 \, \left(1 - \frac{a^j_{i-1}}{1 + K \, Z_M(t)} \right) \, p^j_{i-1} \, N^j_{i-1}(t)} 
\\
\displaystyle{\phantom{\frac{{\rm d}}{{\rm d} t} N^j_i(t) =} + \left(\frac{2 \, a^j_{i}}{1 + K \, Z_M(t)}  - 1 \right) \, p^j_i \, N^j_i(t),}
\\\\
\displaystyle{\frac{{\rm d}}{{\rm d} t} N^j_M(t) = 2 \, \left(1 -  \frac{a^j_{M-1}}{1 + K \, Z_M(t)} \right) \, p^j_{M-1} \, N^j_{M-1}(t) - d \, N^j_M(t),}
\end{array}
\right.
\end{equation}
with $i=2, \ldots, M-1$, $j=0, \ldots, J$ and $\displaystyle{Z_M(t) = \displaystyle{\sum_{j=0}^J} N^j_M(t)}$.

The index $i$ denotes the cell maturation stage while the index $j$ indicates to which leukemic clone the cells belong. In particular, the stem-cell compartment is labelled by the index $i=1$, the indices $i=2, \ldots, M-1$ correspond to increasingly mature progenitor-cell compartments and the mature-cell/blast compartment is labelled by the index $i=M$. Moreover, the index $j=0$ refers to healthy cells, whereas the different leukemic clones are labelled by the indices $j=1, \ldots, J$. 

In the system of ODEs~\eqref{ODE}, the function $N^j_i(t)$ models the density of cells of clone $j$ at the maturation stage $i$ at the time instant $t \geq 0$. Cells in the compartment $i=M$ do not divide and are cleared from the system at rate $d>0$, which is assumed to be the same for healthy and leukemic cells~\citep{Busse,Malinowska,Savitskiy,StiehlBaranHoMarciniak, StiehlSciRep}. The parameters $p^j_i > 0$ and $a^j_i > 0$ model, respectively, the proliferation rate and the self-renewal fraction of cells of clone $j$ at the maturation stage $i$. Coherently with biological findings \citep{Kondo,Layton,Shinjo}, the terms $a^j_i$ are multiplied by the factor $\frac{1}{1 + K Z_M(t)}$ to model the fact that the signal which promotes the self-renewal of dividing cells is absorbed by mature cells and leukemic blasts at a rate that depends on the total density of these cells $Z_M(t)$. The parameter $K>0$ is related to the degradation rate of the feedback signal by mature cells and leukemic blasts. This has proved to be a biologically consistent way of modelling the effects of feedback signals that control cell self-renewal~\citep{MarciniakStiehl,StiehlBMT,StiehlSciRep,StiehlMCM,StiehlMMNP}. In principle, the effects of feedback signals that control cell proliferation could also be included. However, it has been demonstrated that such signals have little impact on the dynamics of the blood system \citep{MarciniakStiehl,StiehlAdvExp,StiehlBMT}. 

A version of the ODE model \eqref{ODE} with only one leukemic clone and three maturation stages (\emph{i.e.} $M=3$ and $J=1$) has been fully analysed by \citet{StiehlMMNP}, while a two compartmental version of the model for healthy hematopoiesis (\emph{i.e.} $M=2$ and $J=0$) has been analysed by \citet{Getto,Nakata,StiehlMCM}. Possible applications of this model to clinical data can be found in the works by \citet{StiehlBaranHoMarciniak,StiehlBaranHoMarciniakCR}, whereas applications to healthy hematopoiesis are provided in the publications by \citet{StiehlAdvExp,StiehlBMT}. Finally, a version of this model with a continuous differentiation structure has been proposed and studied by~\citet{Doumic}. 

\section{Proof of the uniform upper bounds~\eqref{eq:UBfinal}}
In this appendix we prove the upper bounds~\eqref{eq:UBfinal} through a suitable development of the method of proof presented in~\citep{Busse}. Using the system of IDEs~\eqref{e.mod3}, straightforward calculations give the following ODEs
\begin{eqnarray}
\label{ODEF1}
\e \, \frac{{\rm d}}{{\rm d}t}\frac{\rho_{1 \, \e}}{\rho_{2 \, \e}} &=& \frac{1}{\rho_{2 \, \e}} \int_0^1 P_{1}(\rho_{M \e}(t),x)\, n_{1 \e}  \, {\rm d}x \nonumber
\\ 
&& - \frac{\rho_{1 \, \e}}{\rho^2_{2 \, \e}} \int_0^1 Q_{1}(\rho_{M \e}(t),x)\, n_{1 \e}  \, {\rm d}x - \frac{\rho_{1 \, \e}}{\rho^2_{2 \, \e}} \int_0^1 P_{2}(\rho_{M \e}(t),x)\, n_{2 \e} \, {\rm d}x,
\end{eqnarray}

\begin{eqnarray}
\label{ODEF2}
\e \, \frac{{\rm d}}{{\rm d}t}\frac{\rho_{i \, \e}}{\rho_{i+1 \, \e}} &=& \frac{1}{\rho_{i+1 \, \e}} \int_0^1 Q_{i-1}(\rho_{M \e}(t),x)\, n_{i-1 \e}  \, {\rm d}x + \frac{1}{\rho_{i+1 \, \e}} \int_0^1 P_{i}(\rho_{M \e}(t),x)\, n_{i \e} \, {\rm d}x \nonumber
\\ 
&& - \frac{\rho_{i \, \e}}{\rho^2_{i+1 \, \e}} \int_0^1 Q_{i}(\rho_{M \e}(t),x)\, n_{i \e}  \, {\rm d}x 
\nonumber
\\ 
&&
- \frac{\rho_{i \, \e}}{\rho^2_{i+1 \, \e}} \int_0^1 P_{i+1}(\rho_{M \e}(t),x)\, n_{i+1 \e} \, {\rm d}x, \quad \text{for } \; i = 2, \ldots, M-2
\end{eqnarray}
and
\begin{eqnarray}
\label{ODEF3}
\e \, \frac{{\rm d}}{{\rm d}t}\frac{\rho_{M-1 \, \e}}{\rho_{M \, \e}} &=& \frac{1}{\rho_{M \, \e}} \int_0^1 Q_{M-2}(\rho_{M \e}(t),x)\, n_{M-2 \e}  \, {\rm d}x + \frac{1}{\rho_{M \, \e}} \int_0^1 P_{M-1}(\rho_{M \e}(t),x)\, n_{M-1 \e} \, {\rm d}x \nonumber
\\ 
&& - \frac{\rho_{M-1 \, \e}}{\rho^2_{M \, \e}} \int_0^1 Q_{M-1}(\rho_{M \e}(t),x)\, n_{M-1 \e}  \, {\rm d}x + d \, \frac{\rho_{M-1 \, \e}}{\rho_{M \, \e}}.
\end{eqnarray}
Under assumptions~\eqref{ass.a}-\eqref{ass.a1i}, for all $i = 1, \ldots, M-1$ we have
$$
P_{i}(\rho_{M \, \e}(t),x) >  - \|p_{i}\|_{{\rm L}^\infty([0,1])}, \quad P_{i}(\rho_{M \, \e}(t),x) \leq 2 \, \|a_{i}\|_{{\rm L}^\infty([0,1])} \|p_{i}\|_{{\rm L}^\infty([0,1])} =: \overline{P}_i, 
$$
and
$$
Q_{i}(\rho_{M \, \e}(t),x) \geq 2 \, \left(1-\|a_{i}\|_{{\rm L}^\infty([0,1])}\right) \inf_{x \in [0,1]} p_{i} =: \underline{Q}_i  > 0, \quad Q_{i}(\rho_{M \, \e}(t),x) \leq 2 \, \|p_{i}\|_{{\rm L}^\infty([0,1])},
$$
for all $t \geq 0$ and all $x \in [0,1]$. Hence, estimating from above the right-hand sides of the ODEs~\eqref{ODEF1}-\eqref{ODEF3} using the above estimates on $P_i$ and $Q_i$ along with the non-negativity of $n_{i \, \e}$ and $\rho_{i \, \e}$ for all $i = 1, \ldots, M$ gives the following differential inequalities
\begin{eqnarray}
\label{DIEF1}
\e \, \frac{{\rm d}}{{\rm d}t}\frac{\rho_{1 \, \e}}{\rho_{2 \, \e}} &\leq& \overline{P}_1 \, \frac{\rho_{1 \, \e}}{\rho_{2 \, \e}} - \underline{Q}_1 \, \frac{\rho^2_{1 \, \e}}{\rho^2_{2 \, \e}} + \|p_{2}\|_{{\rm L}^\infty([0,1])} \, \frac{\rho_{1 \, \e}}{\rho_{2 \, \e}}\nonumber
\\
&\leq& \left[\left(\overline{P}_1 + \|p_{2}\|_{{\rm L}^\infty([0,1])}\right) - \underline{Q}_1 \frac{\rho_{1 \, \e}}{\rho_{2 \, \e}}\right] \frac{\rho_{1 \, \e}}{\rho_{2 \, \e}},
\end{eqnarray}

\begin{eqnarray}
\label{DIEF2}
\e \, \frac{{\rm d}}{{\rm d}t}\frac{\rho_{i \, \e}}{\rho_{i+1 \, \e}} &\leq& 2 \, \|p_{i-1}\|_{{\rm L}^\infty([0,1])} \, \frac{\rho_{i-1 \, \e}}{\rho_{i+1 \, \e}} + \overline{P}_i \, \frac{\rho_{i \, \e}}{\rho_{i+1 \, \e}}  - \underline{Q}_i \, \frac{\rho^2_{i \, \e}}{\rho^2_{i+1 \, \e}} + \|p_{i+1}\|_{{\rm L}^\infty([0,1])} \frac{\rho_{i \, \e}}{\rho_{i+1 \, \e}} \nonumber
\\
&\leq& \left[\left(2 \, \|p_{i-1}\|_{{\rm L}^\infty([0,1])} \frac{\rho_{i-1 \, \e}}{\rho_{i \, \e}} + \overline{P}_i + \|p_{i+1}\|_{{\rm L}^\infty([0,1])}\right) - \underline{Q}_i \frac{\rho_{i \, \e}}{\rho_{i+1 \, \e}}\right] \frac{\rho_{i \, \e}}{\rho_{i+1 \, \e}} \qquad
\end{eqnarray}
for $i = 2, \ldots, M-2$,

\begin{eqnarray}
\label{DIEF3}
\e \, \frac{{\rm d}}{{\rm d}t}\frac{\rho_{M-1 \, \e}}{\rho_{M \, \e}} &\leq& 2 \, \|p_{M-2}\|_{{\rm L}^\infty([0,1])} \, \frac{\rho_{M-2 \, \e}}{\rho_{M \, \e}} + \overline{P}_{M-1} \, \frac{\rho_{M-1 \, \e}}{\rho_{M \, \e}} - \underline{Q}_{M-1} \, \frac{\rho^2_{M-1 \, \e}}{\rho^2_{M \, \e}} + d \, \frac{\rho_{M-1 \, \e}}{\rho_{M \, \e}} \nonumber
\\
&\leq& \left[\left(2 \, \|p_{M-2}\|_{{\rm L}^\infty([0,1])} \frac{\rho_{M-2 \, \e}}{\rho_{M-1 \, \e}} + \overline{P}_{M-1} + d\right) - \underline{Q}_{M-1} \frac{\rho_{M-1 \, \e}}{\rho_{M \, \e}}\right] \frac{\rho_{M-1 \, \e}}{\rho_{M \, \e}}. \qquad
\end{eqnarray}
The differential inequality~\eqref{DIEF1} yields
\begin{equation}
\label{DIEF1res}
\frac{\rho_{1 \, \e}(t)}{\rho_{2 \, \e}(t)} \leq \max \left(\frac{\|n^0_{1}\|_{{\rm L}^\infty([0,1])}}{\|n^0_{2}\|_{{\rm L}^\infty([0,1])}}, \frac{\overline{P}_1 + \|p_{2}\|_{{\rm L}^\infty([0,1])}}{\underline{Q}_1} \right) =: B_1,
\end{equation}
for all $t \geq 0$. Substituting the estimate~\eqref{DIEF1res} into the differential inequality~\eqref{DIEF2} for $i=2$ we find
\begin{equation}
\label{DIEF2res}
\frac{\rho_{2 \, \e}(t)}{\rho_{3 \, \e}(t)} \leq \max \left(\frac{\|n^0_{2}\|_{{\rm L}^\infty([0,1])}}{\|n^0_{3}\|_{{\rm L}^\infty([0,1])}}, \frac{2 \, \|p_{1}\|_{{\rm L}^\infty([0,1])} B_1 + \overline{P}_2 + \|p_{3}\|_{{\rm L}^\infty([0,1])}}{\underline{Q}_2} \right) =: B_2,
\end{equation}
for all $t \geq 0$. In a similar way, substituting the estimate~\eqref{DIEF2res} into the differential inequality~\eqref{DIEF2} for $i=3$ gives
\begin{equation}
\label{DIEF3res}
\frac{\rho_{3 \, \e}(t)}{\rho_{4 \, \e}(t)} \leq \max \left(\frac{\|n^0_{3}\|_{{\rm L}^\infty([0,1])}}{\|n^0_{4}\|_{{\rm L}^\infty([0,1])}}, \frac{2 \, \|p_{2}\|_{{\rm L}^\infty([0,1])} B_2 + \overline{P}_3 + \|p_{4}\|_{{\rm L}^\infty([0,1])}}{\underline{Q}_3} \right) =: B_3,
\end{equation}
for all $t \geq 0$. Using a bootstrap argument based on the method of proof that we have used for the case $i=3$, one can prove that
\begin{equation}
\label{DIEFires}
\frac{\rho_{i \, \e}(t)}{\rho_{i+1 \, \e}(t)} \leq \max \left(\frac{\|n^0_{i}\|_{{\rm L}^\infty([0,1])}}{\|n^0_{i+1}\|_{{\rm L}^\infty([0,1])}}, \frac{2 \, \|p_{i-1}\|_{{\rm L}^\infty([0,1])} B_{i-1} + \overline{P}_i + \|p_{i+1}\|_{{\rm L}^\infty([0,1])}}{\underline{Q}_i} \right) =: B_i,
\end{equation}
for all $t \geq 0$ and for all $i=4, \ldots, M-2$. Finally, substituting the estimate~\eqref{DIEFires} with $i=M-2$ into the differential inequality~\eqref{DIEF3} we obtain 
\begin{equation}
\label{DIEFMm1res}
\frac{\rho_{M-1 \, \e}(t)}{\rho_{M \, \e}(t)} \leq \max \left(\frac{\|n^0_{M-1}\|_{{\rm L}^\infty([0,1])}}{\|n^0_{M}\|_{{\rm L}^\infty([0,1])}}, \frac{2 \, \|p_{M-2}\|_{{\rm L}^\infty([0,1])} B_{M-2} + \overline{P}_{M-1} + d}{\underline{Q}_{M-1}} \right) =: B_{M-1}
\end{equation}
for all $t \geq 0$. Combining the estimates~\eqref{DIEF1res}-\eqref{DIEFMm1res} yields
$$
\rho_{i \, \e}(t) \leq \rho_{M \, \e}(t) A_i \quad \text{with} \quad A_i := \prod_{k=i}^{M-1} B_k > 0 \quad \text{for } \; i=1,\ldots,M-1
$$
for all $t \geq 0$, which allows us to conclude that
$$
P_{i}(\rho_{M \e}(t),x) \leq \left(\frac{2 \, a_i(x)}{1 + \frac{K}{A_i} \rho_{i \e}(t)} - 1 \right) p_i(x) \; \mbox{ for } \; i=1, \ldots, M-1 
$$
for all $t \geq 0$ and all $x \in [0,1]$. The latter inequality ensures that $P_{i}(\rho_{M \e}(t),x)$ satisfies the following relations for all $t \geq 0$ and each $i=1, \ldots, M-1$ 
\begin{eqnarray}
\label{Piub1}
&&\text{if } \; \displaystyle{\rho_{i \e}(t) \leq \frac{A_i}{K} \, \left(2 \, \|a_{i}\|_{{\rm L}^\infty([0,1])}  -1 \right)} \; \text{ then } \nonumber
\\
&& \phantom{\text{if } \; \displaystyle{\left(2 \, \|a_{i}\|_{{\rm L}^\infty([0,1])} \right)}} 0 \leq \|P_{i}(\rho_{M \e}(t),\cdot)\|_{{\rm L}^\infty([0,1])} \leq \left(\frac{2 \, \|a_{i}\|_{{\rm L}^\infty([0,1])}}{1 + \frac{K}{A_i} \rho_{i \e}(t)} - 1 \right) \, \|p_{i}\|_{{\rm L}^\infty([0,1])} \quad \quad
\end{eqnarray}
while
\begin{eqnarray}
\label{Piub2}
&&\text{if } \; \displaystyle{\rho_{i \e}(t) > \frac{A_i}{K} \, \left(2 \, \|a_{i}\|_{{\rm L}^\infty([0,1])}  -1 \right)} \; \text{ then } \nonumber
\\
&& \phantom{\text{if } \; \displaystyle{\left(2 \, \|a_{i}\|_{{\rm L}^\infty([0,1])} \right)}} \|P_{i}(\rho_{M \e}(t),\cdot)\|_{{\rm L}^\infty([0,1])} \leq \left(\frac{2 \, \|a_{i}\|_{{\rm L}^\infty([0,1])}}{1 + \frac{K}{A_i} \rho_{i \e}(t)} - 1 \right) \, \inf_{x \in [0,1]} p_{i}  < 0. \quad \quad \quad
\end{eqnarray}

Integrating over $[0,1]$ both sides of the IDEs~\eqref{e.mod3} for $n_{1 \, \e}$ and estimating from above gives the following differential inequality
\begin{equation}
\label{finub1}
\e \, \frac{{\rm d}}{{\rm d}t}\rho_{1 \, \e}(t) \leq \|P_{1}(\rho_{M \e}(t),\cdot)\|_{{\rm L}^\infty([0,1])} \, \rho_{1 \, \e}(t)
\end{equation}
from which, using~\eqref{Piub1} and~\eqref{Piub2} with $i=1$, we find that for any $\e>0$ 
$$
\rho_{1 \, \e}(t) \leq \max \left(\|n^0_{1}\|_{{\rm L}^\infty([0,1])}, \frac{A_1}{K} \, \left(2 \, \|a_{1}\|_{{\rm L}^\infty([0,1])}  -1 \right) \right) =: \overline{\rho}_1 \quad \text{for all } \; t \geq 0,
$$
that is, the upper bound~\eqref{eq:UBfinal} on $\rho_{1 \, \e}$ is verified. 

Furthermore, integrating over $[0,1]$ both sides of the IDEs~\eqref{e.mod3} for $n_{2 \, \e}$ and estimating from above using the fact that
$$
Q_{1}(\rho_{M \, \e}(t),x) \leq 2 \, \|p_{1}\|_{{\rm L}^\infty([0,1])} \quad \text{for all } (t,x) \in [0,\infty) \times [0,1] 
$$
along with the upper bound~\eqref{eq:UBfinal} on $\rho_{1 \, \e}$ gives 
\begin{equation}
\label{finubi}
\e \, \frac{{\rm d}}{{\rm d}t}\rho_{2 \, \e}(t) \leq 2 \, \|p_{1}\|_{{\rm L}^\infty([0,1])} \overline{\rho}_1 + \|P_{2}(\rho_{M \e}(t),\cdot)\|_{{\rm L}^\infty([0,1])} \, \rho_{2 \, \e}(t).
\end{equation}
The above differential inequality along with the estimates~\eqref{Piub1} and~\eqref{Piub2} with $i=2$ ensures that there exists $C>0$ such that for any $\e>0$ 
$$
\rho_{2 \, \e}(t) \leq \max \left(\|n^0_{2}\|_{{\rm L}^\infty([0,1])}, \frac{A_2}{K} \, \left(2 \, \|a_{2}\|_{{\rm L}^\infty([0,1])}  -1 \right), \frac{2}{C} \, \|p_{1}\|_{{\rm L}^\infty([0,1])} \, \overline{\rho}_1 \right) =: \overline{\rho}_2
$$
for all $t \geq 0$. Hence, the upper bound~\eqref{eq:UBfinal} on $\rho_{2 \, \e}$ is verified. The upper bounds~\eqref{eq:UBfinal} on $\rho_{i \, \e}$ for $i=3,\ldots,M-1$ can be proved in a similar way using a bootstrap argument. 

Finally, integrating over $[0,1]$ both sides of the IDEs~\eqref{e.mod3} for $n_{M \, \e}$ and estimating from above using the fact that
$$
Q_{M-1}(\rho_{M \, \e}(t),x) \leq 2 \, \|p_{M-1}\|_{{\rm L}^\infty([0,1])} \quad \text{for all } (t,x) \in [0,\infty) \times [0,1] 
$$
along with the upper bound~\eqref{eq:UBfinal} on $\rho_{M-1 \, \e}$ gives the following differential inequality
$$
\e \, \frac{{\rm d}}{{\rm d}t}\rho_{M \, \e}(t) \leq 2 \, \|p_{M-1}\|_{{\rm L}^\infty([0,1])} \overline{\rho}_{M-1} - d \, \rho_{M \e}(t),
$$
which ensures that for any $\e>0$ we have
$$
\rho_{M \, \e}(t) \leq \max \left(\|n^0_{M}\|_{{\rm L}^\infty([0,1])}, \frac{2}{d} \, \|p_{M-1}\|_{{\rm L}^\infty([0,1])} \, \overline{\rho}_{M-1} \right) =: \overline{\rho}_M \quad \text{for all } \; t \geq 0,
$$
that is, the upper bound~\eqref{eq:UBfinal} on $\rho_{M \, \e}$ is verified. 

\begin{acknowledgements}
TS and AM-C were supported by research funding from the German Research Foundation DFG (SFB 873; subproject B08). TL gratefully acknowledges support from the Heidelberg Graduate School (HGS).
\end{acknowledgements}

\bibliography{ATT}   

\begin{thebibliography}{72}
\providecommand{\natexlab}[1]{#1}
\providecommand{\url}[1]{\texttt{#1}}
\expandafter\ifx\csname urlstyle\endcsname\relax
  \providecommand{\doi}[1]{doi: #1}\else
  \providecommand{\doi}{doi: \begingroup \urlstyle{rm}\Url}\fi

\bibitem[Adams et~al.(2015)Adams, Jasper, and Rudolph]{Adams}
P.~Adams, H.~Jasper, and K.~Rudolph.
\newblock Aging-induced stem cell mutations as drivers for disease and cancer.
\newblock \emph{Cell Stem Cell}, 16\penalty0 (6):\penalty0 601--612, 2015.

\bibitem[Anderson et~al.(2011)Anderson, Lutz, van Delft, Bateman, Guo, Colman,
  Kempski, Moorman, Titley, Swansbury, Kearney, Enver, and Greaves]{Anderson}
K.~Anderson, C.~Lutz, F.~van Delft, C.~Bateman, Y.~Guo, S.~Colman, H.~Kempski,
  A.~Moorman, I.~Titley, J.~Swansbury, L.~Kearney, T.~Enver, and M.~Greaves.
\newblock Genetic variegation of clonal architecture and propagating cells in
  leukaemia.
\newblock \emph{Nature}, 469:\penalty0 356--361, 2011.

\bibitem[Bacher et~al.(2008)Bacher, Haferlach, Kern, Haferlach, and
  Schnittger]{Bacher}
U.~Bacher, C.~Haferlach, W.~Kern, T.~Haferlach, and S.~Schnittger.
\newblock Prognostic relevance of {FLT3}-{TKD} mutations in {AML}: the
  combination matters--an analysis of 3082 patients.
\newblock \emph{Blood}, 111\penalty0 (5):\penalty0 2527--2537, 2008.

\bibitem[Barles et~al.(2009)Barles, Mirrahimi, and
  Perthame]{barles2009concentration}
G.~Barles, S.~Mirrahimi, and B.~Perthame.
\newblock Concentration in {L}otka-{V}olterra parabolic or integral equations:
  a general convergence result.
\newblock \emph{Methods and Applications of Analysis}, 16\penalty0
  (3):\penalty0 321--340, 2009.

\bibitem[Belderbos et~al.(2017)Belderbos, Koster, Ausema, Jacobs, Sowdagar,
  Zwart, de~Bont, de~Haan, and Bystrykh]{Belderbos}
M.~Belderbos, T.~Koster, B.~Ausema, S.~Jacobs, S.~Sowdagar, E.~Zwart,
  E.~de~Bont, G.~de~Haan, and L.~Bystrykh.
\newblock Clonal selection and asymmetric distribution of human leukemia in
  murine xenografts revealed by cellular barcoding.
\newblock \emph{Blood}, 129\penalty0 (24):\penalty0 3210--3220, 2017.

\bibitem[Bonnet and Dick(1997)]{Bonnet}
D.~Bonnet and J.~Dick.
\newblock Human acute myeloid leukemia is organized as a hierarchy that
  originates from a primitive hematopoietic cell.
\newblock \emph{Nature Medicine}, 3:\penalty0 730--737, 1997.

\bibitem[Busse et~al.(2016)Busse, Gwiazda, and Marciniak-Czochra]{Busse}
J.~Busse, P.~Gwiazda, and A.~Marciniak-Czochra.
\newblock Mass concentration in a nonlocal model of clonal selection.
\newblock \emph{Journal of Mathematical Biology}, 73\penalty0 (4):\penalty0
  1001--1033, 2016.

\bibitem[Calsina et~al.(2013)Calsina, Cuadrado, Desvillettes, and
  Raoul]{calsina2013asymptotics}
{\`A}.~Calsina, S.~Cuadrado, L.~Desvillettes, and G.~Raoul.
\newblock Asymptotics of steady states of a selection--mutation equation for
  small mutation rate.
\newblock \emph{Proceedings of the Royal Society of Edinburgh Section A:
  Mathematics}, 143\penalty0 (6):\penalty0 1123--1146, 2013.

\bibitem[Cartwright et~al.(1964)Cartwright, Athens, and Wintrobe]{Cartwright}
G.~E. Cartwright, J.~W. Athens, and M.~M. Wintrobe.
\newblock The kinetics of granulopoiesis in normal man.
\newblock \emph{Blood}, 24:\penalty0 780--803, 1964.

\bibitem[Chisholm et~al.(2016)Chisholm, Lorenzi, and Lorz]{chisholm2016effects}
R.~H. Chisholm, T.~Lorenzi, and A.~Lorz.
\newblock Effects of an advection term in nonlocal {L}otka--{V}olterra
  equations.
\newblock \emph{Communications in Mathematical Sciences}, 14\penalty0
  (4):\penalty0 1181--1188, 2016.

\bibitem[Choi et~al.(2007)Choi, Henderson, Kwan, Beesley, Sutton, Bahar, Giles,
  Venn, Pozza, Baker, Marshall, Kees, Haber, and Norris]{Choi}
S.~Choi, M.~Henderson, E.~Kwan, A.~Beesley, R.~Sutton, A.~Bahar, J.~Giles,
  N.~Venn, L.~Pozza, D.~Baker, G.~Marshall, U.~Kees, M.~Haber, and M.~Norris.
\newblock Relapse in children with acute lymphoblastic leukemia involving
  selection of a preexisting drug-resistant subclone.
\newblock \emph{Blood}, 110:\penalty0 632--639, 2007.

\bibitem[Cronkite(1979)]{Cronkite}
E.~P. Cronkite.
\newblock Kinetics of granulopoiesis.
\newblock \emph{Clinical Haematology}, 8:\penalty0 351--370, 1979.

\bibitem[Delitala and Lorenzi(2012)]{delitala2012asymptotic}
M.~Delitala and T.~Lorenzi.
\newblock Asymptotic dynamics in continuous structured populations with
  mutations, competition and mutualism.
\newblock \emph{Journal of Mathematical Analysis and Applications},
  389\penalty0 (1):\penalty0 439--451, 2012.

\bibitem[Desvillettes et~al.(2008)Desvillettes, Jabin, Mischler, and
  Raoul]{desvillettes2008selection}
L.~Desvillettes, P.~E. Jabin, S.~Mischler, and G.~Raoul.
\newblock On selection dynamics for continuous structured populations.
\newblock \emph{Communications in Mathematical Sciences}, 6\penalty0
  (3):\penalty0 729--747, 2008.

\bibitem[Diekmann et~al.(2005)Diekmann, Jabin, Mischler, and
  Perthame]{diekmann2005dynamics}
O.~Diekmann, P.-E. Jabin, S.~Mischler, and B.~Perthame.
\newblock The dynamics of adaptation: an illuminating example and a
  {H}amilton--{J}acobi approach.
\newblock \emph{Theoretical Population Biology}, 67\penalty0 (4):\penalty0
  257--271, 2005.

\bibitem[Ding et~al.(2012)Ding, Ley, Larson, Miller, Koboldt, Welch, Ritchey,
  Young, Lamprecht, McLellan, McMichael, Wallis, Lu, Shen, Harris, Dooling,
  Fulton, Fulton, Chen, Schmidt, Kalicki-Veizer, Magrini, Cook, McGrath,
  Vickery, Wendl, Heath, Watson, Link, Tomasson, Shannon, Payton, Kulkarni,
  Westervelt, Walter, Graubert, Mardis, Wilson, and DiPersio]{Ding}
L.~Ding, T.~Ley, D.~Larson, C.~Miller, D.~Koboldt, J.~Welch, J.~Ritchey,
  M.~Young, T.~Lamprecht, M.~McLellan, J.~McMichael, J.~Wallis, C.~Lu, D.~Shen,
  C.~Harris, D.~Dooling, R.~Fulton, L.~Fulton, K.~Chen, H.~Schmidt,
  J.~Kalicki-Veizer, V.~Magrini, L.~Cook, S.~McGrath, T.~Vickery, M.~Wendl,
  S.~Heath, M.~Watson, D.~Link, M.~Tomasson, W.~Shannon, J.~Payton,
  S.~Kulkarni, P.~Westervelt, M.~Walter, T.~Graubert, E.~Mardis, R.~Wilson, and
  J.~DiPersio.
\newblock Clonal evolution in relapsed acute myeloid leukaemia revealed by
  whole-genome sequencing.
\newblock \emph{Nature}, 481:\penalty0 506--510, 2012.

\bibitem[Doulatov et~al.(2009)Doulatov, Notta, Rice, Howell, Zelent, Licht, and
  Dick]{Doulatov}
S.~Doulatov, F.~Notta, K.~Rice, L.~Howell, A.~Zelent, J.~Licht, and J.~Dick.
\newblock {PLZF} is a regulator of homeostatic and cytokine-induced myeloid
  development.
\newblock \emph{Genes \& Development}, 23\penalty0 (17):\penalty0 2076--2087,
  2009.

\bibitem[Doumic-Jauffret et~al.(2011)Doumic-Jauffret, Marciniak-Czochra,
  Perthame, and Zubelli]{Doumic}
M.~Doumic-Jauffret, A.~Marciniak-Czochra, B.~Perthame, and J.~Zubelli.
\newblock A structured population model of cell differentiation.
\newblock \emph{SIAM Journal on Applied Mathematics (SIAP)}, 71:\penalty0
  1918--1940, 2011.

\bibitem[Eppert et~al.(2011)Eppert, Takenaka, Lechman, Waldron, Nilsson, van
  Galen, Metzeler, Poeppl, Ling, Beyene, Canty, Danska, Bohlander, Buske,
  Minden, Golub, Jurisica, Ebert, and Dick]{Eppert}
K.~Eppert, K.~Takenaka, E.~Lechman, L.~Waldron, B.~Nilsson, P.~van Galen,
  K.~Metzeler, A.~Poeppl, V.~Ling, J.~Beyene, A.~Canty, J.~Danska,
  S.~Bohlander, C.~Buske, M.~Minden, T.~Golub, I.~Jurisica, B.~Ebert, and
  J.~Dick.
\newblock Stem cell gene expression programs influence clinical outcome in
  human leukemia.
\newblock \emph{Nature Medicine}, 17\penalty0 (9):\penalty0 1086--1093, 2011.

\bibitem[Gale et~al.(2008)Gale, Green, Allen, Mead, Burnett, Hills, and Linch
  DC; Medical Research Council Adult Leukaemia~Working]{Gale}
R.~Gale, C.~Green, C.~Allen, A.~Mead, A.~Burnett, R.~Hills, and P.~Linch DC;
  Medical Research Council Adult Leukaemia~Working.
\newblock The impact of {FLT3} internal tandem duplication mutant level,
  number, size, and interaction with {NPM1} mutations in a large cohort of
  young adult patients with acute myeloid leukemia.
\newblock \emph{Blood}, 111\penalty0 (5):\penalty0 2776--2784, 2008.

\bibitem[Getto et~al.(2013)Getto, Marciniak-Czochra, Nakata, and
  Vivanco]{Getto}
P.~Getto, A.~Marciniak-Czochra, Y.~Nakata, and M.~Vivanco.
\newblock Global dynamics of two-compartment models for cell production systems
  with regulatory mechanisms.
\newblock \emph{Mathematical Biosciences}, 245:\penalty0 258--268, 2013.

\bibitem[Ghosh et~al.(2016)Ghosh, Kobayashi, Ramdas, Chatterjee, Ma, Mali,
  Carlesso, Liu, Plas, Chan, and Kapur]{Ghosh}
J.~Ghosh, M.~Kobayashi, B.~Ramdas, A.~Chatterjee, P.~Ma, R.~Mali, N.~Carlesso,
  Y.~Liu, D.~Plas, R.~Chan, and R.~Kapur.
\newblock S6k1 regulates hematopoietic stem cell self-renewal and leukemia
  maintenance.
\newblock \emph{Journal of Clinical Investigation}, 126\penalty0 (7):\penalty0
  2621--2625, 2016.

\bibitem[Gwiazda et~al.(2012)Gwiazda, Jamroz, and Marciniak-Czochra]{GJMC2012}
P.~Gwiazda, G.~Jamroz, and A.~Marciniak-Czochra.
\newblock Models of discrete and continuous cell differentiation in the
  framework of transport equation.
\newblock \emph{SIAM Journal on Mathematical Analysis}, 44:\penalty0
  1103--1133, 2012.

\bibitem[Heuser et~al.(2009)Heuser, Sly, Argiropoulos, Kuchenbauer, Lai, Weng,
  Leung, Lin, Brookes, Fung, Valk, Delwel, Loewenberg, Krystal, and
  Humphries]{Heuser}
M.~Heuser, L.~Sly, B.~Argiropoulos, F.~Kuchenbauer, C.~Lai, A.~Weng, M.~Leung,
  G.~Lin, C.~Brookes, S.~Fung, P.~Valk, R.~Delwel, B.~Loewenberg, G.~Krystal,
  and R.~Humphries.
\newblock Modeling the functional heterogeneity of leukemia stem cells: role of
  {STAT5} in leukemia stem cell self-renewal.
\newblock \emph{Blood}, 114\penalty0 (19):\penalty0 3983--3993, 2009.

\bibitem[Hirsch et~al.(2016)Hirsch, Zhang, Tang, Joulin, Boutroux, Pronier,
  Moatti, Flandrin, Marzac, Bories, Fava, Mokrani, Betems, Lorre, Favier,
  Feger, Mohty, Douay, Legrand, Bilhou-Nabera, Louache, and Delhommeau]{Hirsch}
P.~Hirsch, Y.~Zhang, R.~Tang, V.~Joulin, H.~Boutroux, E.~Pronier, H.~Moatti,
  P.~Flandrin, C.~Marzac, D.~Bories, F.~Fava, H.~Mokrani, A.~Betems, F.~Lorre,
  R.~Favier, F.~Feger, M.~Mohty, L.~Douay, O.~Legrand, C.~Bilhou-Nabera,
  F.~Louache, and F.~Delhommeau.
\newblock Genetic hierarchy and temporal variegation in the clonal history of
  acute myeloid leukaemia.
\newblock \emph{Nature Communications}, 7:\penalty0 12475, 2016.

\bibitem[Hope et~al.(2004)Hope, Jin, and Dick]{Hope}
K.~Hope, L.~Jin, and J.~Dick.
\newblock Acute myeloid leukemia originates from a hierarchy of leukemic stem
  cell classes that differ in self-renewal capacity.
\newblock \emph{Nature Immunology}, 5:\penalty0 738--743, 2004.

\bibitem[Jan and Majeti(2013)]{Jan}
M.~Jan and R.~Majeti.
\newblock Clonal evolution of acute leukemia genomes.
\newblock \emph{Oncogene}, 32:\penalty0 135--140, 2013.

\bibitem[Jung et~al.(2015)Jung, Dai, Gentles, Majeti, and Feinberg]{Jung}
N.~Jung, B.~Dai, A.~Gentles, R.~Majeti, and A.~Feinberg.
\newblock An {LSC} epigenetic signature is largely mutation independent and
  implicates the {HOXA} cluster in {AML} pathogenesis.
\newblock \emph{Nature Communications}, 6:\penalty0 8489, 2015.

\bibitem[Kikushige et~al.(2015)Kikushige, Miyamoto, Yuda, Jabbarzadeh-Tabrizi,
  Shima, Takayanagi, Niiro, Yurino, Miyawaki, Takenaka, Iwasaki, and
  Akashi]{Kikushige}
Y.~Kikushige, T.~Miyamoto, J.~Yuda, S.~Jabbarzadeh-Tabrizi, T.~Shima,
  S.~Takayanagi, H.~Niiro, A.~Yurino, K.~Miyawaki, K.~Takenaka, H.~Iwasaki, and
  K.~Akashi.
\newblock A {TIM-3/Gal-9} autocrine stimulatory loop drives self-renewal of
  human myeloid leukemia stem cells and leukemic progression.
\newblock \emph{Cell Stem Cell}, 17\penalty0 (3):\penalty0 341--52, 2015.

\bibitem[Knauer(2012)]{Knauer2012}
F.~Knauer.
\newblock Dynamical behaviour of a single feedback-controlled haematopoietic
  model.
\newblock Master's thesis, Heidelberg University, 2012.

\bibitem[Knauer et~al.(2019)Knauer, Stiehl, and
  Marciniak-Czochra]{Knaueretal2019}
F.~Knauer, T.~Stiehl, and A.~Marciniak-Czochra.
\newblock Oscillations in a white blood cell production model with multiple
  differentiation stages. arxiv:1812.02017.
\newblock \emph{Submitted}, 2019.

\bibitem[Kondo et~al.(1991)Kondo, Okamura, Asano, Harada, and Niho]{Kondo}
S.~Kondo, S.~Okamura, Y.~Asano, M.~Harada, and Y.~Niho.
\newblock Human granulocyte colony-stimulating factor receptors in acute
  myelogenous leukemia.
\newblock \emph{European Journal of Haematology}, 46:\penalty0 223--230, 1991.

\bibitem[Lagunas-Rangel and Ch{\'a}vez-Valencia(2017)]{Lagunas-Rangel}
F.~Lagunas-Rangel and V.~Ch{\'a}vez-Valencia.
\newblock {FLT3-ITD} and its current role in acute myeloid leukaemia.
\newblock \emph{Medical Oncology}, 34\penalty0 (6):\penalty0 114, 2017.

\bibitem[Layton et~al.(1989)Layton, Hockman, Sheridan, and Morstyn]{Layton}
J.~Layton, H.~Hockman, W.~Sheridan, and G.~Morstyn.
\newblock Evidence for a novel in vivo control mechanism of granulopoiesis:
  mature cell-related control of a regulatory growth factor.
\newblock \emph{Blood}, 74:\penalty0 1303--1307, 1989.

\bibitem[Ley et~al.(2008)Ley, Mardis, Ding, Fulton, McLellan, Chen, Dooling,
  Dunford-Shore, McGrath, Hickenbotham, Cook, Abbott, Larson, Koboldt, Pohl,
  Smith, Hawkins, Abbott, Locke, Hillier, Miner, Fulton, Magrini, Wylie,
  Glasscock, Conyers, Sander, Shi, Osborne, Minx, Gordon, Chinwalla, Zhao,
  Ries, Payton, Westervelt, Tomasson, Watson, Baty, Ivanovich, Heath, Shannon,
  Nagarajan, Walter, Link, Graubert, DiPersio, and Wilson]{Ley}
T.~Ley, E.~Mardis, L.~Ding, B.~Fulton, M.~McLellan, K.~Chen, D.~Dooling,
  B.~Dunford-Shore, S.~McGrath, M.~Hickenbotham, L.~Cook, R.~Abbott, D.~Larson,
  D.~Koboldt, C.~Pohl, S.~Smith, A.~Hawkins, S.~Abbott, D.~Locke, L.~Hillier,
  T.~Miner, L.~Fulton, V.~Magrini, T.~Wylie, J.~Glasscock, J.~Conyers,
  N.~Sander, X.~Shi, J.~Osborne, P.~Minx, D.~Gordon, A.~Chinwalla, Y.~Zhao,
  R.~Ries, J.~Payton, P.~Westervelt, M.~Tomasson, M.~Watson, J.~Baty,
  J.~Ivanovich, S.~Heath, W.~Shannon, R.~Nagarajan, M.~Walter, D.~Link,
  T.~Graubert, J.~DiPersio, and R.~Wilson.
\newblock {DNA} sequencing of a cytogenetically normal acute myeloid leukaemia
  genome.
\newblock \emph{Nature}, 456\penalty0 (7218):\penalty0 66--72, 2008.

\bibitem[Lorenzi et~al.(2014)Lorenzi, Lorz, and Restori]{lorenzi2014asymptotic}
T.~Lorenzi, A.~Lorz, and G.~Restori.
\newblock Asymptotic dynamics in populations structured by sensitivity to
  global warming and habitat shrinking.
\newblock \emph{Acta Applicandae Mathematicae}, 131\penalty0 (1):\penalty0
  49--67, 2014.

\bibitem[Lorz et~al.(2011)Lorz, Mirrahimi, and Perthame]{lorz2011dirac}
A.~Lorz, S.~Mirrahimi, and B.~Perthame.
\newblock Dirac mass dynamics in multidimensional nonlocal parabolic equations.
\newblock \emph{Communications in Partial Differential Equations}, 36\penalty0
  (6):\penalty0 1071--1098, 2011.

\bibitem[Lutz et~al.(2013{\natexlab{a}})Lutz, Hoang, and Ho]{LutzHoang}
C.~Lutz, V.~Hoang, and A.~Ho.
\newblock Identifying leukemia stem cells -- is it feasible and does it matter?
\newblock \emph{Cancer Letters}, 338:\penalty0 10--14, 2013{\natexlab{a}}.

\bibitem[Lutz et~al.(2013{\natexlab{b}})Lutz, Woll, Hall, Castor, Dreau,
  Cazzaniga, Zuna, Jensen, Clark, Biondi, Mitchell, Ferry, Schuh, Buckle,
  Jacobsen, and Enver]{Lutz2013}
C.~Lutz, P.~Woll, G.~Hall, A.~Castor, H.~Dreau, G.~Cazzaniga, J.~Zuna,
  C.~Jensen, S.~Clark, A.~Biondi, C.~Mitchell, H.~Ferry, A.~Schuh, V.~Buckle,
  S.~Jacobsen, and T.~Enver.
\newblock Quiescent leukaemic cells account for minimal residual disease in
  childhood lymphoblastic leukaemia.
\newblock \emph{Leukemia}, 27:\penalty0 1204--1207, 2013{\natexlab{b}}.

\bibitem[Malinowska et~al.(2002)Malinowska, Stelmaszczyk-Emmel, Wasik, and
  Rokicka-Milewska]{Malinowska}
I.~Malinowska, A.~Stelmaszczyk-Emmel, M.~Wasik, and R.~Rokicka-Milewska.
\newblock Apoptosis and {pH} of blasts in acute childhood leukemia.
\newblock \emph{Medical Science Monitor}, 8:\penalty0 CR441--CR447, 2002.

\bibitem[Marciniak-Czochra et~al.(2009)Marciniak-Czochra, Stiehl, {J\"ager},
  Ho, and Wagner]{MarciniakStiehl}
A.~Marciniak-Czochra, T.~Stiehl, W.~{J\"ager}, A.~D. Ho, and W.~Wagner.
\newblock Modeling of asymmetric cell division in hematopoietic stem cells --
  regulation of self-renewal is essential for efficient repopulation.
\newblock \emph{Stem Cells and Development}, 18:\penalty0 377--385, 2009.

\bibitem[Marciniak-Czochra et~al.(2018)Marciniak-Czochra, Mikelic, and
  Stiehl]{Mikelic}
A.~Marciniak-Czochra, A.~Mikelic, and T.~Stiehl.
\newblock Renormalization group second order approximation for singularly
  perturbed nonlinear ordinary differential equations.
\newblock \emph{Mathematical Methods in the Applied Sciences}, 41:\penalty0
  5691--5710, 2018.

\bibitem[Metzeler et~al.(2013)Metzeler, Maharry, Kohlschmidt, Volinia, Mrozek,
  Becker, Nicolet, Whitman, Mendler, Schwind, Eisfeld, Wu, Powell, Carter,
  Wetzler, Kolitz, Baer, Carroll, Stone, Caligiuri, Marcucci, and
  Bloomfield]{Metzeler}
K.~Metzeler, K.~Maharry, J.~Kohlschmidt, S.~Volinia, K.~Mrozek, H.~Becker,
  D.~Nicolet, S.~Whitman, J.~Mendler, S.~Schwind, A.~Eisfeld, Y.~Wu, B.~Powell,
  T.~Carter, M.~Wetzler, J.~Kolitz, M.~Baer, A.~Carroll, R.~Stone,
  M.~Caligiuri, G.~Marcucci, and C.~Bloomfield.
\newblock A stem cell-like gene expression signature associates with inferior
  outcomes and a distinct micro{RNA} expression profile in adults with primary
  cytogenetically normal acute myeloid leukemia.
\newblock \emph{Leukemia}, 27\penalty0 (10):\penalty0 2023--2031, 2013.

\bibitem[Morgan et~al.(2007)Morgan, Desai, Edgar, Glotzer, Heald, Karsenti,
  Nasmyth, Pines, and Sherr]{AlbertsCC}
D.~Morgan, A.~Desai, B.~Edgar, M.~Glotzer, R.~Heald, E.~Karsenti, K.~Nasmyth,
  J.~Pines, and C.~Sherr.
\newblock \emph{The Cell Cycle. In: B. Alberts, A. Johnson, J. Lewis, M. Raff,
  K. Roberts, R. Walter (Eds): Molecular Biology of the Cell, 5th Edition}.
\newblock Garland Science, 2007.

\bibitem[Nakata et~al.(2012)Nakata, Getto, Marciniak-Czochra, and
  Alarcon]{Nakata}
Y.~Nakata, P.~Getto, A.~Marciniak-Czochra, and T.~Alarcon.
\newblock Stability analysis of multi-compartment models for cell production
  systems.
\newblock \emph{Journal of Biological Dynamics}, 6 Suppl 1:\penalty0 2--18,
  2012.

\bibitem[Naoe and Kiyoi(2013)]{Naoe}
T.~Naoe and H.~Kiyoi.
\newblock Gene mutations of acute myeloid leukemia in the genome era.
\newblock \emph{International Journal of Hematology}, 97\penalty0 (2):\penalty0
  165--174, 2013.

\bibitem[Noetzli et~al.(2017)Noetzli, Gavillet, Masouridi-Levrat, Duchosal, and
  Spertini]{Noetzli}
J.~Noetzli, M.~Gavillet, S.~Masouridi-Levrat, M.~Duchosal, and O.~Spertini.
\newblock {T315I} clone selection in a {Ph+} all patient under low-dose
  ponatinib maintenance.
\newblock \emph{Clinical Case Reports}, 5\penalty0 (8):\penalty0 1320--1322,
  2017.

\bibitem[Perthame and Barles(2008)]{perthame2008dirac}
B.~Perthame and G.~Barles.
\newblock Dirac concentrations in {L}otka-{V}olterra parabolic pdes.
\newblock \emph{Indiana University Mathematics Journal}, pages 3275--3301,
  2008.

\bibitem[Pui et~al.(2015)Pui, Yang, Hunger, Pieters, Schrappe, Biondi, Vora,
  Baruchel, Silverman, Schmiegelow, Escherich, Horibe, Benoit, Izraeli, Yeoh,
  Liang, Downing, Evans, Relling, and Mullighan]{Pui}
C.~Pui, J.~Yang, S.~Hunger, R.~Pieters, M.~Schrappe, A.~Biondi, A.~Vora,
  A.~Baruchel, L.~Silverman, K.~Schmiegelow, G.~Escherich, K.~Horibe,
  Y.~Benoit, S.~Izraeli, A.~Yeoh, D.~Liang, J.~Downing, W.~Evans, M.~Relling,
  and C.~Mullighan.
\newblock Childhood acute lymphoblastic leukemia: Progress through
  collaboration.
\newblock \emph{Journal of Clinical Oncology}, 33\penalty0 (27):\penalty0
  2938--2948, 2015.

\bibitem[Raoul(2011)]{raoul2011long}
G.~Raoul.
\newblock Long time evolution of populations under selection and vanishing
  mutations.
\newblock \emph{Acta Applicandae Mathematicae}, 114\penalty0 (1):\penalty0
  1--14, 2011.

\bibitem[Reya et~al.(2001)Reya, Morrison, Clarke, and Weissman]{Reya}
T.~Reya, S.~Morrison, M.~Clarke, and I.~Weissman.
\newblock Stem cells, cancer, and cancer stem cells.
\newblock \emph{Nature}, 414\penalty0 (6859):\penalty0 105--111, 2001.

\bibitem[Roelofs et~al.(2003)Roelofs, de~Pauw, Zwinderman, Opdam, Willemze,
  Tanke, and Fibbe]{Roelofs}
H.~Roelofs, E.~de~Pauw, A.~Zwinderman, S.~Opdam, R.~Willemze, H.~Tanke, and
  W.~Fibbe.
\newblock Homeostasis of telomere length rather than telomere shortening after
  allogeneic peripheral blood stem cell transplantation.
\newblock \emph{Blood}, 101\penalty0 (1):\penalty0 358--62, 2003.

\bibitem[Savitskiy et~al.(2003)Savitskiy, Shman, and Potapnev]{Savitskiy}
V.~Savitskiy, T.~Shman, and M.~Potapnev.
\newblock Comparative measurement of spontaneous apoptosis in pediatric acute
  leukemia by different techniques.
\newblock \emph{Cytometry Part B: Clinical Cytometry}, 56:\penalty0 16--22,
  2003.

\bibitem[Shepherd et~al.(2004)Shepherd, Guttorp, Lansdorp, and
  Abkowitz]{Shepherd}
B.~E. Shepherd, P.~Guttorp, P.~M. Lansdorp, and J.~L. Abkowitz.
\newblock Estimating human hematopoietic stem cell kinetics using granulocyte
  telomere lengths.
\newblock \emph{Experimental Hematology}, 32:\penalty0 1040--1050, 2004.

\bibitem[Shinjo et~al.(1997)Shinjo, Takeshita, Ohnishi, and Ohno]{Shinjo}
K.~Shinjo, A.~Takeshita, K.~Ohnishi, and R.~Ohno.
\newblock Granulocyte colony-stimulating factor receptor at various stages of
  normal and leukemic hematopoietic cells.
\newblock \emph{Leukemia \& Lymphoma}, 25:\penalty0 37--46, 1997.

\bibitem[Stiehl and Marciniak-Czochra(2011)]{StiehlMCM}
T.~Stiehl and A.~Marciniak-Czochra.
\newblock Characterization of stem cells using mathematical models of
  multistage cell lineages.
\newblock \emph{Mathematical and Computer Modelling}, 53:\penalty0 1505--1517,
  2011.

\bibitem[Stiehl and Marciniak-Czochra(2012)]{StiehlMMNP}
T.~Stiehl and A.~Marciniak-Czochra.
\newblock Mathematical modelling of leukemogenesis and cancer stem cell
  dynamics.
\newblock \emph{Mathematical Modelling of Natural Phenomena}, 7:\penalty0
  166--202, 2012.

\bibitem[Stiehl and Marciniak-Czochra(2017)]{SMC-Review2017}
T.~Stiehl and A.~Marciniak-Czochra.
\newblock Stem cell self-renewal in regeneration and cancer: Insights from
  mathematical modeling.
\newblock \emph{Current Opinion in Systems Biology}, 5:\penalty0 112--120,
  2017.

\bibitem[Stiehl and Marciniak-Czochra(2019)]{SMCReview2019}
T.~Stiehl and A.~Marciniak-Czochra.
\newblock How to characterize stem cells? contributions from mathematical
  modeling. doi: 10.1007/s40778-019-00155-0.
\newblock \emph{Current Stem Cell Reports}, 2019.

\bibitem[Stiehl et~al.(2014{\natexlab{a}})Stiehl, Baran, Ho, and
  Marciniak-Czochra]{StiehlBaranHoMarciniak}
T.~Stiehl, N.~Baran, A.~Ho, and A.~Marciniak-Czochra.
\newblock Clonal selection and therapy resistance in acute leukaemias:
  mathematical modelling explains different proliferation patterns at diagnosis
  and relapse.
\newblock \emph{Journal of the Royal Society Interface}, 11:\penalty0 20140079,
  2014{\natexlab{a}}.

\bibitem[Stiehl et~al.(2014{\natexlab{b}})Stiehl, Ho, and
  Marciniak-Czochra]{StiehlAdvExp}
T.~Stiehl, A.~Ho, and A.~Marciniak-Czochra.
\newblock Assessing hematopoietic (stem-) cell behavior during regenerative
  pressure.
\newblock \emph{Advances in Experimental Medicine and Biology}, 844:\penalty0
  347--367, 2014{\natexlab{b}}.

\bibitem[Stiehl et~al.(2014{\natexlab{c}})Stiehl, Ho, and
  Marciniak-Czochra]{StiehlBMT}
T.~Stiehl, A.~Ho, and A.~Marciniak-Czochra.
\newblock The impact of {CD34+} cell dose on engraftment after {SCTs}:
  personalized estimates based on mathematical modeling.
\newblock \emph{Bone Marrow Transplant}, 49:\penalty0 30--37,
  2014{\natexlab{c}}.

\bibitem[Stiehl et~al.(2015)Stiehl, Baran, Ho, and
  Marciniak-Czochra]{StiehlBaranHoMarciniakCR}
T.~Stiehl, N.~Baran, A.~Ho, and A.~Marciniak-Czochra.
\newblock Cell division patterns in acute myeloid leukemia stem-like cells
  determine clinical course: a model to predict patient survival.
\newblock \emph{Cancer Research}, 75:\penalty0 940--949, 2015.

\bibitem[Stiehl et~al.(2016)Stiehl, Lutz, and Marciniak-Czochra]{StiehlBD}
T.~Stiehl, C.~Lutz, and A.~Marciniak-Czochra.
\newblock Emergence of heterogeneity in acute leukemias.
\newblock \emph{Biology Direct}, 11(1):\penalty0 51, 2016.

\bibitem[Stiehl et~al.(2018)Stiehl, Ho, and Marciniak-Czochra]{StiehlSciRep}
T.~Stiehl, A.~Ho, and A.~Marciniak-Czochra.
\newblock Cytokine response of leukemic cells has impact on patient prognosis:
  Insights from mathematical modeling.
\newblock \emph{Scientific Reports}, 8:\penalty0 2809, 2018.

\bibitem[van Delft et~al.(2011)van Delft, Horsley, Colman, Anderson, Bateman,
  Kempski, Zuna, Eckert, Saha, Kearney, Ford, and Greaves]{VanDelft}
F.~van Delft, S.~Horsley, S.~Colman, K.~Anderson, C.~Bateman, H.~Kempski,
  J.~Zuna, C.~Eckert, V.~Saha, L.~Kearney, A.~Ford, and M.~Greaves.
\newblock Clonal origins of relapse in {ETV6-RUNX1} acute lymphoblastic
  leukemia.
\newblock \emph{Blood}, 117:\penalty0 6247--6254, 2011.

\bibitem[Wang et~al.(2017)Wang, Stiehl, Raffel, Hoang, Hoffmann, Poisa-Beiro,
  Saeed, Blume, Manta, Eckstein, Bochtler, Wuchter, Essers, Jauch, Trumpp,
  Marciniak-Czochra, Ho, and Lutz]{Wang}
W.~Wang, T.~Stiehl, S.~Raffel, V.~Hoang, I.~Hoffmann, L.~Poisa-Beiro, B.~Saeed,
  R.~Blume, L.~Manta, V.~Eckstein, T.~Bochtler, P.~Wuchter, M.~Essers,
  A.~Jauch, A.~Trumpp, A.~Marciniak-Czochra, A.~Ho, and C.~Lutz.
\newblock Reduced hematopoietic stem cell frequency predicts outcome in acute
  myeloid leukemia.
\newblock \emph{Haematologica}, 102\penalty0 (9):\penalty0 1567--1577, 2017.

\bibitem[Wang et~al.(2010)Wang, Krivtsov, Sinha, North, Goessling, Feng, Zon,
  and Armstrong]{Wang2}
Y.~Wang, A.~Krivtsov, A.~Sinha, T.~North, W.~Goessling, Z.~Feng, L.~Zon, and
  S.~Armstrong.
\newblock The {W}nt/beta-catenin pathway is required for the development of
  leukemia stem cells in {AML}.
\newblock \emph{Science}, 327\penalty0 (5973):\penalty0 1650--1653, 2010.

\bibitem[Werner et~al.(2013)Werner, Dingli, and Traulsen]{Werner}
B.~Werner, D.~Dingli, and A.~Traulsen.
\newblock A deterministic model for the occurrence and dynamics of multiple
  mutations in hierarchically organized tissues.
\newblock \emph{Journal of the Royal Society Interface}, 10\penalty0
  (85):\penalty0 20130349, 2013.

\bibitem[Wu et~al.(2017)Wu, Jug, Luedke, Su, Rehder, McCall, Lagoo, and
  Wang]{Wu}
B.~Wu, R.~Jug, C.~Luedke, P.~Su, C.~Rehder, C.~McCall, A.~Lagoo, and E.~Wang.
\newblock Lineage switch between b-lymphoblastic leukemia and acute myeloid
  leukemia intermediated by 'occult' myelodysplastic neoplasm: Two cases of
  adult patients with evidence of genomic instability and clonal selection by
  chemotherapy.
\newblock \emph{American Journal of Clinical Pathology}, 148\penalty0
  (2):\penalty0 136--147, 2017.

\bibitem[Yamamoto et~al.(2013)Yamamoto, Morita, Ooehara, Hamanaka, Onodera,
  Rudolph, Ema, and Nakauchi]{Yamamoto}
R.~Yamamoto, Y.~Morita, J.~Ooehara, S.~Hamanaka, M.~Onodera, K.~Rudolph,
  H.~Ema, and H.~Nakauchi.
\newblock Clonal analysis unveils self-renewing lineage-restricted progenitors
  generated directly from hematopoietic stem cells.
\newblock \emph{Cell.}, 154\penalty0 (5):\penalty0 1112--1126, 2013.

\bibitem[Yassin et~al.(2010)Yassin, Abdul-Nabi, Takeda, and Yaseen]{Yassin}
E.~Yassin, A.~Abdul-Nabi, A.~Takeda, and N.~Yaseen.
\newblock Effects of the {NUP98-DDX10} oncogene on primary human {CD34+} cells:
  role of a conserved helicase motif.
\newblock \emph{Leukemia.}, 24\penalty0 (5):\penalty0 1001--1011, 2010.

\end{thebibliography}


\end{document}